\def\emph{\alert}
\newcounter{quotethmcnt}
\newenvironment{quotethm}[2][theorem]{\def\quotethmname{quotethm\arabic{quotethmcnt}}\theoremstyle{#1}\newtheorem*{\quotethmname}{\autoref{#2}}\begin{\quotethmname}}{\end{\quotethmname}\stepcounter{quotethmcnt}}
\def\equationautorefname~#1\null{(#1)}
\def\itemautorefname~#1\null{#1}
\newcommand{\mynewthm}[3][]{%
  \newaliascnt{#2}{thmnum}%
  \newtheorem{#2}[#2]{#3}%
  \aliascntresetthe{#2}%
  \newtheorem*{#2*}{#3}%
  \expandafter\newcommand\csname #2autorefname\endcsname{#3}%
  \expandafter\renewcommand\csname the#2\endcsname{\thethmnum}%
}
\newtheorem*{clm}{Claim}
\newenvironment{clmprf}{%
  \begin{proof}[Proof of claim]%
  }{\end{proof}}
\let\xxx=\frametitle
\def\frametitle#1{%
  \xxx{%
    \setbeamercolor*{math text}{use={titlelike,my math text},fg=titlelike.fg!80!my math text.fg}%
    #1}%
  \setbeamercolor{math text}{use=my math text,fg=my math text.fg}%
}
\newcommand{\beamerenv}[3]{%
\newenvironment<>{#1}%
{%
  \setbeamercolor{temp}{fg=structure.fg}%
  \setbeamercolor{structure}{fg=#2}%
  \setbeamercolor{block body}{use=structure,bg=structure.fg!5!white}%
  \begin{#3}%
}%
{\end{#3}\setbeamercolor{structure}{fg=temp.fg}}}
\newcommand{\mynewthm}[3][green!50!black]{%
  \newtheorem*{#2x}{#3}%
  \beamerenv{#2}{#1}{#2x}%
}
\newcommand{\myiffrench}[2]{#2}
\newcommand{\myiffrench}[2]{\iflanguage{french}{#1}{#2}}
\theoremstyle{plain}
\theoremstyle{definition}
\theoremstyle{remark}
\newcommand{\myenumlabel}[1]{\textnormal{(\roman{#1})}}
\newcounter{cycprfcnt}
\newcounter{cycprffirst}
\newcommand{\cycprfpreamble}[1]%
{%
  \setcounter{cycprfcnt}{1}
  \setcounter{cycprffirst}{#1}
  \setlength{\itemindent}{0.5\leftmargin}%
  \setlength{\leftmargin}{0pt}%
  \newcommand{\cpcurr}{\myenumlabel{cycprfcnt}}%
  \newcommand{\cpnext}{\addtocounter{cycprfcnt}{1}\cpcurr}%
  \newcommand{\cpnum}[1]{\setcounter{cycprfcnt}{##1}\cpcurr}%
  \newcommand{\cpfirst}{\cpnum{1}}%
  \newcommand{\impnext}{\cpcurr{} $\Longrightarrow$ \cpnext.}%
  \newcommand{\impfirst}{\cpcurr{} $\Longrightarrow$ \cpfirst.}%
  \def\makelabel##1{\ifnum\value{cycprffirst}=0\hspace{-0.7\itemindent}\setcounter{cycprffirst}{1}\fi##1}%
}%
\newenvironment{cycprf}[1][0]%
{\begin{list}{\impnext}{\cycprfpreamble{#1}}}%
{\qedhere\end{list}}%
\newenvironment{cycprf*}[1][0]%
{\begin{list}{\impnext}{\cycprfpreamble{#1}}}%
{\end{list}}%
\def\indsym#1#2{%
  \setbox0=\hbox{$\m@th#1x$}%
  \kern\wd0%
  \hbox to 0pt{\hss$\m@th#1\mid$\hbox to 0pt{$\m@th#1^{#2}$\hss}\hss}%
  \lower.9\ht0\hbox to 0pt{\hss$\m@th#1\smile$\hss}%
  \kern\wd0}
\newcommand{\ind}[1][]{\mathop{\mathpalette\indsym{#1}}}
\def\nindsym#1#2{%
  \setbox0=\hbox{$\m@th#1x$}%
  \kern\wd0%
  \hbox to 0pt{\hss$\m@th#1\not$\kern1.4\wd0\hss}
  \hbox to 0pt{\hss$\m@th#1\mid$\hbox to 0pt{$\m@th#1^{#2}$\hss}\hss}%
  \lower.9\ht0\hbox to 0pt{\hss$\m@th#1\smile$\hss}%
  \kern\wd0}
\def\dotminussym#1#2{%
  \setbox0=\hbox{$\m@th#1-$}%
  \kern.5\wd0%
  \hbox to 0pt{\hss\hbox{$\m@th#1-$}\hss}%
  \raise.6\ht0\hbox to 0pt{\hss$\m@th#1.$\hss}%
  \kern.5\wd0}
\renewcommand{\emptyset}{\varnothing}
\renewcommand{\setminus}{\smallsetminus}
\newcommand{\rest}{{\restriction}}
\newcommand{\half}[1][1]{\tfrac{#1}{2}}
\DeclareMathOperator{\tp}{tp}
\newcommand{\Cb}{\mathrm{Cb}}
\DeclareMathOperator{\Th}{Th}
\DeclareMathOperator{\tS}{S}
\DeclareMathOperator{\dcl}{dcl}
\DeclareMathOperator{\acl}{acl}
\DeclareMathOperator{\co}{co}
\newcommand{\cco}{\overline\co}
\newcommand{\id}{\mathrm{id}}
\DeclareMathOperator{\Iso}{Iso}
\DeclareMathOperator{\Aut}{Aut}
\DeclareMathOperator{\Homeo}{Homeo}
\DeclareMathOperator{\supp}{supp}
\DeclareMathOperator{\sgn}{sgn}
\newcommand{\fM}{\mathfrak{M}}
\newcommand{\fN}{\mathfrak{N}}
\newcommand{\fS}{\mathfrak{S}}
\newcommand{\cL}{\mathcal{L}}
\newcommand{\bG}{\mathbf{G}}
\newcommand{\bN}{\mathbf{N}}
\newcommand{\bQ}{\mathbf{Q}}
\newcommand{\bR}{\mathbf{R}}
\newcommand{\bU}{\mathbf{U}}
\newcommand{\ba}{\mathbf{a}}
\newcommand{\bb}{\mathbf{b}}
\newcommand{\bone}{\mathbf{1}}
\newcommand{\meq}{\mathrm{meq}}
\begin{document}

\title{On Roeckle-precompact Polish group that cannot act transitively on a complete metric space}

\author{Itaï \textsc{Ben Yaacov}}

\address{Itaï \textsc{Ben Yaacov} \\
  Université Claude Bernard -- Lyon 1 \\
  Institut Camille Jordan, CNRS UMR 5208 \\
  43 boulevard du 11 novembre 1918 \\
  69622 Villeurbanne Cedex \\
  France}

\urladdr{\url{http://math.univ-lyon1.fr/~begnac/}}

\thanks{Author supported by ANR projects GruPoLoCo (ANR-11-JS01-008) and ValCoMo (ANR-13-BS01-0006).}
\thanks{The author wishes to thank Julien \textsc{Melleray} and Todor \textsc{Tsankov} for inspiring discussions, in particular regarding properties of the automorphism group of the Gurarij space.}

\svnInfo $Id: NoTrans.tex 3142 2016-09-19 13:53:21Z begnac $
\thanks{\textit{Revision} {\svnInfoRevision} \textit{of} \today}

\keywords{Polish group, Roelcke-precompact group, group action, transitive action, isometric action, uniform distance, Bohr compactification}
\subjclass[2010]{37B05, 03E15}

\begin{abstract}
  We study when a continuous isometric action of a Polish group on a complete metric space is, or can be, transitive.
  Our main results consist of showing that for certain Polish groups, namely $\Aut^*(\mu)$ and $\Homeo^+[0,1]$, such an action can never be transitive (unless the space acted upon is a singleton).

  We also point out that in all known examples, this pathology coincides with the pathology of Polish groups that are not closed permutation groups and yet have discrete uniform distance, asking whether there is a relation.
  We conclude with a general characterisation/classification of transitive continuous isometric actions of a Roeckle-precompact Polish groups on a complete metric spaces.
  In particular, the morphism from a Roeckle-precompact Polish group to its Bohr compactification is surjective.
\end{abstract}

\maketitle

\tableofcontents

\section*{Introduction}

In this paper we discuss some pathological properties of certain Roelcke-precompact Polish groups.
We shall use as a starting point the following question, asked by Julien \textsc{Melleray}:

\begin{qst*}
  Is there a Polish group that admits no transitive and continuous action by isometries on a non-trivial complete metric space?
  Better still, is there such a group that is Roelcke-precompact (i.e., the automorphism group of an $\aleph_0$-categorical metric structure)?
\end{qst*}

One motivation for the question is a frustrating ``gap'' between theory and practice in the domain of $\aleph_0$-categorical metric structures.
Theory tells us that any $\aleph_0$-categorical structure, Fraïssé limit, separable atomic structure, or separable approximately saturated structure, is \emph{approximately homogeneous}: any two tuples of the same type can be sent \emph{arbitrarily close} to one another by an automorphism (see \cite{BenYaacov-Usvyatsov:dFiniteness,BenYaacov:MetricFraisse}).
On the other hand, in practice, most ``interesting'' structures (all discrete homogeneous structures, as well as the Urysohn space/sphere, the Hilbert space, the atomless probability algebra) are ``precisely homogeneous'', that is to say that tuples of the same type are actually conjugate by an automorphism.

The space of $[0,1]$-valued random variables on an atomless probability space (\cite{BenYaacov:RandomVariables}) is not precisely homogeneous, but is bi-interpretable with the atomless probability algebra, which is precisely homogeneous (that is to say that the two structures are merely two presentations of the same ``underlying mathematical object'', and in particular have the same automorphism group -- see \cite{BenYaacov-Kaichouh:Reconstruction} for more details).
More generally, approximate homogeneity is a robust notion, invariant under bi-interpretation, but precise homogeneity is not, and \emph{any} metric structure (with a non-compact automorphism group) is bi-interpretable with one that is not precisely homogeneous.
Thus the question becomes, is there an $\aleph_0$-categorical structure that is not even bi-interpretable with a precisely homogeneous one?

The set of realisations of a type is a complete set.
Approximate homogeneity means that the automorphism group acts topologically transitively, and even minimally, on that set, while precise homogeneity means that the action is transitive.
Considering structures up to bi-interpretability means, in particular, considering the action on imaginary sorts, namely arbitrary continuous isometric actions on complete metric spaces, whence Melleray's formulation.

We answer this in the affirmative, proving:

\begin{quotethm}{thm:AutStar}
  Let $G = \Aut^*(\mu)$, the group of measure-class-preserving transformations of the Lebesgue measure on $[0,1]$.
  It is naturally isomorphic (as a topological group) to the (isometric) automorphism groups of the Banach lattice $\bigl( L_p(\mu), \wedge, \vee \bigr)$, for any $1 \leq p < \infty$, so it is in particular Roelcke-precompact.

  A continuous action by isometries of $G$ on a complete metric space $X$ cannot be transitive, unless $X$ is a singleton.
\end{quotethm}

This settles Melleray's question and all its variants mentioned above, but opens two new questions.
The first question has to do with the fact that, before \autoref{thm:AutStar} was proved, there were three ``natural candidates'' for a positive example, namely Roelcke-precompact groups for which no transitive action on a complete space was known: $\Aut^*(\mu)$, $\Aut(\bG)$ (the group of linear isometries of the Gurarij space) and $\Homeo^+[0,1]$.
Oddly enough, all three share another pathological property: all three have discrete uniform distance, in the sense of \autoref{sec:DiscreteUniformDistance}, a property that is relatively easy to test for, and are the only ones with this property among the Roelcke pre-compact Polish group the author has encountered so far.
This raises the question of a connection between the two properties, and acts as an incentive for deciding whether $\Aut(\bG)$ and $\Homeo^+[0,1]$ also answer Melleray's question.
The case of $\Homeo^+[0,1]$ is studied in \autoref{sec:H01}, again with an affirmative answer.

The second question is more of an aesthetic nature: while the main assertion of \autoref{thm:AutStar} is purely topological, the argument makes heavy use of model theory, in at least two points.
The first is \autoref{prp:Aleph0CategoricalNamedACL}, i.e., the passage from $\aleph_0$-categoricity over a named parameter to $\aleph_0$-categoricity over its algebraic closure.
This is a general fact, which can be be translated to topological terms as:
\begin{quotethm}{thm:SurjectiveBohrCompactification}
  Let $G$ be a Roelcke-precompact Polish group and $\varphi\colon G \rightarrow B$ its Bohr compactification.
  Then $\varphi$ is surjective.
\end{quotethm}
In \autoref{sec:TransitiveAction} we give a general characterisation of continuous isometric transitive actions of a Roelcke-precompact group on a complete metric space, from which this follows (subsequent to the circulation of a draft of the present paper, Todor \textsc{Tsankov} announced a more direct proof).

Model theory comes in a second time, in the use of stability in order to get weak elimination of metric imaginaries (WEMI) in atomless $L_p$ Banach lattices.
We content ourselves with a translation of WEMI to topological terms (\autoref{lem:Aleph0CategoricalEMI}), advising the reader that model-theoretic stability has a clear topological counterpart in the form of weak almost periodicity -- see \cite{BenYaacov-Tsankov:WAP,BenYaacov:Grothendieck}.

\section{Metric imaginaries}
\label{sec:Imaginaries}

We start with a few general reminders regarding imaginary sorts in metric structures.
For this we assume some familiarity with the model theory of metric structures as exposed in \cite{BenYaacov-Usvyatsov:CFO} or \cite{BenYaacov-Berenstein-Henson-Usvyatsov:NewtonMS}.
In the specific case of $\aleph_0$-categorical structures one can give an alternative definition (\autoref{fct:Aleph0CategoricalImaginary}), and the reader who is willing to accept a few black boxes may skip there directly.

\begin{dfn}
  \label{dfn:MetricImaginaries}
  Let $\fM$ be a metric structure, say in a single-sorted language $\cL$.
  Let $\rho$ be a definable (without parameters) pseudo-distance on $\fM^\bN$ (we shall consider any pseudo-distance on $\fM^n$ as a pseudo-distance on $\fM^\bN$ through the addition of dummy variables).
  Define an equivalence relation $\xi \sim_\rho \zeta \Longleftrightarrow \rho(\xi,\zeta) = 0$ (the \emph{kernel} of $\rho$), and let $(\fM_\rho,d)$ be the completion of the metric space $(\fM^\bN/{\sim_\rho},\rho)$.
  We call $\fM_\rho$ a \emph{metric imaginary sort} and a member thereof a \emph{metric imaginary}.
  We define $\fM^\meq$ to be the disjoint union of all metric imaginary sorts of $\fM$ (identifying $\fM$ with $\fM_d$).

  We observe that any elementary embedding $\fM \hookrightarrow \fN$ induces an isometric embedding map $\fM_\rho \hookrightarrow \fN_\rho$.
  This is true in particular for automorphisms of $\fM$, giving rise to a continuous isomteric action $\Aut(\fM) \curvearrowright \fM^\meq$.

  For each $\rho$ and $n$ we define a predicate symbol $P_{\rho,n}(x,y)$ on $\fM^n \times \fM_\rho$ by:
  \begin{gather*}
    P_{\rho,n}(x,y) = \inf_{x' \in \fM^\bN} \, \rho(xx',y).
  \end{gather*}
  We let $\cL^\meq$ denote the original language $\cL$ together with the new predicate symbols (bounds and continuity moduli for $P_{\rho,n}$ can be deduced from those of $\rho$).

  The same definitions can be extended to a multi-sorted structure, in which case a definable pseudo-distance, or any definable predicate for that matter, is defined on some countable (possibly finite) product of sorts.
\end{dfn}

\begin{rmk}
  We may restrict the definition to $\rho$ that define a pseudo-distance in \emph{every} $\cL$-structure.
  Indeed, if $\varphi(x,y)$ is any definable predicate then $\rho_\varphi(x,y) = \sup_z\, \bigl| \varphi(x,z) - \varphi(y,z) \bigr|$ always defines a pseudo-distance; if $\varphi$ already defines a pseudo-distance in $\fM$ then it agrees there with $\rho_\varphi$.
  Thus $\cL^\meq$ need only depend on $\cL$ and not on $\fM$.

  In \cite{BenYaacov-Usvyatsov:CFO} we defined imaginary only for such $\rho_\varphi$ where $z$ is a finite tuple, so the definition here is slightly more general.
\end{rmk}

One easily checks that:
\begin{enumerate}
\item The structure $\fM^\meq$ is interpretable in $\fM$ in the sense of \cite{BenYaacov-Kaichouh:Reconstruction}.
\item The $\cL^\meq$-elementary extensions of $\fM^\meq$ are exactly those of the form $\fN^\meq$ where $\fN \succeq \fM$.
\item If the theory of $\fM$ is model-complete then so is the $\cL^\meq$-theory of $\fM^\meq$.
\end{enumerate}

\begin{dfn}
  \label{dfn:MetricDACL}
  Let $A \subseteq \fM^\meq$ be some set.
  We say that $b \in \fM^\meq$ is \emph{definable} (respectively, \emph{algebraic}) over $A$ if for every elementary extension $\fN \succeq \fM$ the set $\{c \in \fN^\meq : c \equiv_A b\}$ consists of $b$ alone (is compact).

  The \emph{definable closure} $\dcl^\meq(A) \subseteq \fM^\meq$ (respectively, the \emph{algebraic closure} $\acl^\meq(A) \subseteq \fM^\meq$) is the collection of all imaginaries $b$ definable (respectively, algebraic) over $A$.
\end{dfn}

If $A \subseteq \fM^\meq$ and $\fN \succeq \fM$, then $\dcl^\meq(A)$ evaluates in $\fM$ and in $\fN$ to the same set (a subset of $\fM^\meq$).
Also, $b$ is definable (algebraic) over $A$ only if it is definable (algebraic) over some countable subset $A_0 \subseteq A$.

\begin{lem}
  \label{lem:Countable}
  \begin{enumerate}
  \item Every countable $A \subseteq \fM^\meq$ is interdefinable with some singleton $b \in \fM^\meq$ (i.e., $\dcl^\meq(A) = \dcl^\meq(b)$).
  \item If $\fM$ is separable in a countable language then every set $A \subseteq \fM^\meq$ is interdefinable with some countable subset $A_0 \subseteq A$, and thus with some singleton $b \in \fM^\meq$
  \end{enumerate}
\end{lem}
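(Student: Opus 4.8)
The plan is to prove the two items in turn, establishing (1) by an explicit ``countable product of imaginary sorts'' construction, and then deducing (2) from (1) together with a separability argument; in each item the ``interdefinable with a singleton'' conclusion is immediate once the relevant reduction is in place.

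\emph{Part (1).} Write $a_i\in\fM_{\rho_i}$ with each $\rho_i$ a $\emptyset$-definable pseudo-distance; adding dummy variables we may take every $\rho_i$ defined on $\fM^\bN$, and, replacing $\rho_i$ by $\min(1,\rho_i)$ (again a $\emptyset$-definable pseudo-distance), we may assume $\rho_i\le 1$. Fixing a bijection $\bN\times\bN\cong\bN$, define a pseudo-distance on $\fM^\bN\cong(\fM^\bN)^\bN$ by
\[
  \rho\bigl((\xi^i)_{i},(\zeta^i)_{i}\bigr)=\sum_i 2^{-i}\rho_i(\xi^i,\zeta^i).
\]
This is a pseudo-distance, and it is $\emptyset$-definable, being the uniform limit of its partial sums (each a finite combination of the $\rho_i$). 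Its kernel is the coordinatewise intersection of the kernels of the $\rho_i$, so $(\fM^\bN/{\sim_\rho},\rho)$ is canonically $\prod_i(\fM^\bN/{\sim_{\rho_i}})$ with the weighted product distance; since a weighted countable product of completions is the completion of the product, $\fM_\rho$ is canonically identified, $\Aut(\fM)$-equivariantly, with $\prod_i\fM_{\rho_i}$. Let $b=(a_i)_i\in\fM_\rho$ be the point with those coordinates. It remains to check $\dcl^\meq(b)=\dcl^\meq(A)$, and for this I would exhibit the coordinate projections $p_i\colon\fM_\rho\to\fM_{\rho_i}$ as $\emptyset$-definable functions of $\cL^\meq$: concretely, produce $\emptyset$-definable predicates $Q_i(u,v)$ on $\fM_\rho\times\fM_{\rho_i}$ with $Q_i(u,v)=d_{\rho_i}(p_i(u),v)$. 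Such a $Q_i$ is obtained by noting that $\bar\xi\mapsto d_{\rho_i}([\,i\text{-th block of }\bar\xi\,]_{\rho_i},v)$ is $\cL^\meq$-definable on $\fM^\bN$, factors through $\fM^\bN\to\fM^\bN/{\sim_\rho}$, and is uniformly continuous for $\rho$ (smallness of $\rho(\bar\xi,\bar\xi')$ forces smallness of $\rho_i$ on the $i$-th blocks, since $\rho_i\le 2^i\rho$), hence extends to $\fM_\rho$, and $\cL^\meq$-definable predicates are closed under composition and uniform limits. Granting the $Q_i$: if $c\equiv_A b$ then $Q_i(c,a_i)=Q_i(b,a_i)=0$ for all $i$, so $p_i(c)=a_i=p_i(b)$ for all $i$, hence $c=b$, giving $b\in\dcl^\meq(A)$; and if $c\equiv_b a_i$ then $Q_i(b,c)=Q_i(b,a_i)=0$, so $c=a_i$, giving $A\subseteq\dcl^\meq(b)$. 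The only real work in (1) is the routine verification that $Q_i$ is genuinely $\cL^\meq$-definable.

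\emph{Part (2).} By (1) it suffices to find a \emph{countable} $A_0\subseteq A$ with $A\subseteq\dcl^\meq(A_0)$. Two observations set this up: (a) in any metric structure, if $a=\lim_n a_n$ with $\{a_n:n\in\bN\}$ a countable set of imaginaries in a fixed sort, then $a\in\dcl^\meq(\{a_n:n\})$, since $c\equiv_{\{a_n\}}a$ forces $d(c,a_n)=d(a,a_n)$ for all $n$ and hence $d(c,a)=0$; and (b) since $\fM$ is separable in a countable language, every imaginary sort $\fM_\rho$ is separable (being the completion of the image of the separable space $\fM^\bN$ under the $\rho$-quotient, a continuous surjection onto a dense set), and every imaginary is a limit of classes $[\bar\xi_n]_\rho$ with $\bar\xi_n\in\fM^\bN$, hence lies in $\dcl^\meq$ of the countable set of reals $\bigcup_n\bar\xi_n$. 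Using these I would argue that the $\dcl^\meq$-dimension of $A$ is countable: if not, then (invoking that every element of $\dcl^\meq(A)$ already lies in $\dcl^\meq$ of a countable subset of $A$) one extracts an $\omega_1$-sequence $(a_\xi)_{\xi<\omega_1}$ in $A$ with $a_\xi\notin\dcl^\meq(\{a_\eta:\eta<\xi\})$, chooses countable real witness sets $C_{a_\xi}\subseteq\fM$, and derives a contradiction from the separability of $\fM$ (and of $\fM^\bN$) since the uncountably many countable sets $C_{a_\xi}$ must accumulate. Turning that accumulation into an honest $\dcl^\meq$-dependence among the $a_\xi$ themselves — which forces one to control how each $a_\xi$ is recovered from its witnesses and to rule out that these recoveries can all be ``independent'' in a separable structure — is the main obstacle in (2). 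Once a countable interdefinable $A_0\subseteq A$ is secured, (1) supplies the single imaginary $b$.
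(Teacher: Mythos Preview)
Your Part~(1) is correct and complete; the paper dismisses this item as ``fairly standard'' without details, so your explicit product-sort construction is a welcome expansion.

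Your Part~(2) has a genuine gap, which you yourself flag: you extract the $\omega_1$-sequence $(a_\xi)$ and associated countable witness sets $C_{a_\xi}\subseteq\fM$, observe that by separability these must ``accumulate,'' and then stop, calling the passage from accumulation to an honest $\dcl^\meq$-dependence ``the main obstacle.'' That obstacle is real --- mere topological accumulation of the $C_{a_\xi}$ in a separable space says nothing about definability relations among the $a_\xi$, since two imaginaries can have arbitrarily close witness sequences in $\fM^{\bN}$ and yet be $\dcl^\meq$-independent.

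The paper sidesteps this entirely by never introducing per-element witness sets. Instead it fixes \emph{once and for all} a countable dense sequence $\bb\subseteq\fM$ and, for each ordinal $i<\omega_1$, encodes the information of $(a_j)_{j<i}$ as a partial type $\pi_i(x,y)$ in the fixed $2\times\bN$ real variables: $\pi_i(x,y)$ asserts the existence of $z$ with $xz\equiv yz\equiv\bb(a_j)_{j<i}$. The failure $a_i\notin\dcl^\meq\bigl((a_j)_{j<i}\bigr)$ then produces (in a saturated extension) some $\bb'$ satisfying $\pi_i(\bb,\bb')$ but not $\pi_{i+1}(\bb,\bb')$, so $[\pi_i]\supsetneq[\pi_{i+1}]$ as closed subsets of $\tS_{2\times\bN}(\emptyset)$. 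This yields a strictly decreasing $\omega_1$-chain of closed sets in a type space which, because the language is countable, is second-countable --- a contradiction. The trick you are missing is precisely this translation of the $\dcl^\meq$-independence chain into a descending chain of closed sets in a \emph{fixed} second-countable space of real types over~$\emptyset$.
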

\begin{proof}
  The first item is fairly standard.
  For the second, assume not.
  Then we can find a sequence $(a_i)_{i < \aleph_1} \subseteq A$ such that each $a_i$ is not definable over the ones preceding it.
  Fix a countable dense sequence $\bb = (b_n)_{n \in \bN} \subseteq \fM$.
  For $i < \aleph_1$ let $\ba_i = (a_j)_{j<i}$ and let $\pi_i(x,y)$ be the partial type in $2 \times \bN$ variables saying that there exists a sequence $z$ of length $i$ such that $xz \equiv yz \equiv \bb\ba_i$, noticing that if such $z$ exists it must be unique.
  Fix $i$.
  By hypothesis there exists (in a sufficiently saturated elementary extension) $a' \equiv_{\ba_i} a_i$ such that $a' \neq a_i$.
  Let $\ba' = (\ba_i,a') \equiv \ba_{i+1}$, and find $\bb'$ such that $\bb'\ba' \equiv \bb\ba_{i+1}$.
  Then $\pi_i(\bb,\bb')$ holds but $\pi_{i+1}(\bb,\bb')$ fails, so each $\pi_i$ is strictly stronger the the preceding ones.
  This defines a strictly decreasing sequence of closed sets in $\tS_{2 \times \bN}(\emptyset)$, which is therefore not second-countable.
  This is impossible in a countable language.
\end{proof}

\begin{dfn}
  \label{dfn:EliminationMetricImaginaries}
  \begin{enumerate}
  \item We say that $\fM$ has \emph{elimination of metric imaginaries (EMI)} if for every $a \in \fM^\meq$ there exists a subset $A \subseteq \fM$ that is interdefinable with $a$, namely, such that $A \subseteq \dcl^\meq(a)$ and $a \in \dcl^\meq(A)$, i.e., such that $\dcl^\meq(a) = \dcl^\meq(A)$.
  \item We say that $\fM$ has \emph{weak elimination of metric imaginaries (WEMI)} if for every $a \in \fM^\meq$ there exists a subset $A \subseteq \fM$ such that $A \subseteq \acl^\meq(a)$ and $a \in \dcl^\meq(A)$.
    Equivalently, if for every $a \in \fM^\meq$ there exists a subset $A \subseteq \fM$ such that $\acl^\meq(a) = \dcl^\meq(A)$.
  \item We say that a theory $T$ has EMI or WEMI if all its models do.
  \end{enumerate}
\end{dfn}

When specialised to classical logic, (W)EMI is equivalent to elimination of hyperimaginaries (in the sense of \cite{Buechler-Pillay-Wagner:SupersimpleTheories}, i.e., down to imaginaries) plus ordinary (weak) elimination of imaginaries.
For the weak case some argument is required, see \cite{Buechler-Pillay-Wagner:SupersimpleTheories}.

\begin{fct}
  \label{fct:StableWEMI}
  Let $T$ be a stable theory such that every type of a tuple in the home sort over a model admits a canonical base in the home sort (or in some family of distinguished sorts).
  Then $T$ admits WEMI (down to the distinguished family of sorts).
\end{fct}
\begin{proof}
  This is essentially folklore but we give a quick proof for completeness.

  Let $\fM \vDash T$ and $a \in \fM^\meq$.
  Then $\fM^\meq$, being interpretable in $\fM$, is stable, and we can find $\fN \succeq \fM$ and $\fM_1 \preceq \fN$ such that $\fM \equiv_a \fM_1$ (so $a \in \fM_1^\meq$) and $\fM_1 \ind_a \fM$.
  Let $A = \Cb(\fM_1/\fM)$ as a set in the home sort (or in the distinguished family of sorts).
  Then first, $A \subseteq \fM$.
  Second, $\fM_1 \ind_a \fM$ implies that $A \subseteq \acl^\meq(a)$.
  Finally, since $a \in \fM_1^\meq$, the type $a/\fM$ is definable over $A$.
  Since $a \in \fM^\meq$ as well, $\Cb(a/\fM) = a$, so $a \in \dcl^\meq(A)$.
\end{proof}

\begin{dfn}
  \label{dfn:MetricGSpace}
  Let $G$ be a topological group.
  By a \emph{metric $G$-space} we shall mean a non-empty metric space $X$ together with an action $G \curvearrowright X$ that is jointly continuous and acts by isometry.
  If $X$ is complete, we call it a \emph{complete $G$-space}.
  Following \cite{BenYaacov-Tsankov:WAP}, for $x \in X$ we let $[x] = \overline{G x}$ denote the orbit closure of $x$.
  The set of orbit closures $X \sslash G = \bigl\{ [x] : x \in X \bigr\}$ forms a partition of $X$.
  We equip $X \sslash G$ with the set-distance function induced from $X$.
  It is always a metric space, and is complete if $X$ is.

  Morphisms between $G$-spaces are continuous maps that respect the action.

  We say that a complete $G$-space $X$ is \emph{approximately oligomorphic} if $X^n \sslash G$ is compact for all $n$.
\end{dfn}

When $G$ is metrisable (e.g., Polish) then it admits a compatible left-invariant distance $d_L$, unique up to uniform equivalence.
The \emph{left completion} of $G$ is the completion $\widehat{G}_L = \widehat{(G,d_L)}$.
It is naturally a complete $G$-space under the left action of $G$, and the action of $G$ on any complete $G$-space extends continuously to an action of $\widehat{G}_L$ (whose action on itself makes it a topological semi-group).
Notice that a metric $G$-space $X$ is minimal if and only if $X \sslash G$ is a singleton if and only if $G \curvearrowright X$ is topologically transitive (equivalently, minimal).
In particular, $G \curvearrowright \widehat{G}_L$ is minimal.

\begin{dfn}
  A topological group $G$ is \emph{Roelcke-precompact} if for every nonempty open $U \subseteq G$ there exists a finite set $F \supseteq G$ such that $UFU = G$.
  Equivalently, if its \emph{Roelcke completion} $R(G) = \widehat{G}_L^2 \sslash G$ is compact.
\end{dfn}

By \cite{BenYaacov-Tsankov:WAP}, a Polish group $G$ is Roelcke-precompact if and only if it is the automorphism group of an $\aleph_0$-categorical metric structure if and only if, for a complete $G$-space $X$ to be approximately oligomorphic it suffices that $X \sslash G$ be compact.
More generally, if both $X \sslash G$ and $Y \sslash G$ are compact (for a Roelcke-precompact $G$) then so is $(X \times Y) \sslash G$.

\begin{fct}[Ryll-Nardzewski/Henson Theorem for metric structures, see \cite{BenYaacov-Usvyatsov:dFiniteness} for a more detailed statement]
  \label{fct:RyllNardzewski}
  A structure $\fM$ is $\aleph_0$-categorical if and only if $G \curvearrowright \fM$ is approximately oligomorphic, where $G = \Aut(\fM)$.
  In this case every continuous $G$-invariant predicate is definable.
\end{fct}

\begin{lem}
  \label{lem:RoelckePrecompactAmalgamation}
  Let $G$ be a Roelcke-precompact Polish group.
  Let $X_i$ be minimal complete $G$-spaces and $x_i,y_i \in X_i$ for $i \in \bN$.
  Then there exist $\xi_i \in \widehat{G}_L$ such that $\xi_i x_i = \xi_{i+1} y_i$ for all $i$.

  A commonly used special case is the following: let $I$ be countable, $X$ a minimal complete $G$-space, and $x_i \in X$ for $i \in I$.
  Then there exist $\xi_i \in \widehat{G}_L$ and $z \in X$ such that $z = \xi_i x_i$ for all $i \in I$.
\end{lem}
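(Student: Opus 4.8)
The plan is to realise the required equations approximately by genuine elements of $G$, and then use Roelcke-precompactness to pass to an exact solution in $\widehat{G}_L$; the second (``special case'') assertion will be a formal consequence of the first.

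\emph{Approximate solutions.} Fix $m \in \bN$. Define $\xi^m_i \in G$ recursively: set $\xi^m_1 = \id$, and, given $\xi^m_i \in G$, use the minimality of $X_i$ (so that $x_i \in \overline{G y_i}$) to choose $k^m_i \in G$ with $d(k^m_i y_i, x_i) < 1/m$, and put $\xi^m_{i+1} = \xi^m_i k^m_i$. Each $\xi^m_i$ is then a finite product of elements of $G$, hence lies in $G \subseteq \widehat{G}_L$, and, since $\xi^m_i$ acts by isometries, $d(\xi^m_{i+1} y_i, \xi^m_i x_i) = d(k^m_i y_i, x_i) < 1/m$ for every $i$.

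\emph{Passing to the limit.} Metrise $\widehat{G}_L^{\bN}$ by $D(\bar\xi,\bar\eta) = \sum_{i} 2^{-i}\min\bigl(d_L(\xi_i,\eta_i),1\bigr)$; with the coordinatewise left $G$-action it is a complete $G$-space on which $G$ acts by isometries. We use Roelcke-precompactness through the compactness of $\widehat{G}_L^{\bN}\sslash G$: each finite power $\widehat{G}_L^n\sslash G$ is compact (iterate the product fact, using that $\widehat{G}_L\sslash G$ is a point), so $\widehat{G}_L^{\bN}\sslash G$, being complete and totally bounded (an $\varepsilon$-net is obtained from an $\varepsilon/2$-net of $\widehat{G}_L^N\sslash G$ with $2^{-N} < \varepsilon/2$, by adjoining arbitrary tails), is compact. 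Hence the orbit closures $\bigl[(\xi^m_i)_i\bigr] \in \widehat{G}_L^{\bN}\sslash G$ admit a convergent subsequence, whose limit is the orbit closure of some $\bar\eta = (\eta_i)_i \in \widehat{G}_L^{\bN}$; say this holds along $(m_k)$. Since $G$ acts by isometries, the orbit-space distance unwinds to give $g_k \in G$ with $D\bigl((\xi^{m_k}_i)_i,\, g_k(\eta_i)_i\bigr) \to 0$, and therefore $d_L(\xi^{m_k}_i, g_k\eta_i) \to 0$ for each fixed $i$.

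\emph{Exactness and the special case.} Fix $i$. The maps $\xi \mapsto \xi x_i$ and $\xi\mapsto\xi y_i$ are uniformly continuous $\widehat{G}_L \to X_i$, so $d(\xi^{m_k}_i x_i, g_k\eta_i x_i)\to 0$ and $d(\xi^{m_k}_{i+1}y_i, g_k\eta_{i+1}y_i)\to 0$. Using that $g_k$ acts isometrically with $g_k(\eta_i x_i) = (g_k\eta_i)x_i$, the triangle inequality yields $d(\eta_i x_i, \eta_{i+1}y_i) = d(g_k\eta_i x_i, g_k\eta_{i+1}y_i) \le d(g_k\eta_i x_i,\xi^{m_k}_i x_i) + 1/m_k + d(\xi^{m_k}_{i+1}y_i, g_k\eta_{i+1}y_i)$, whose right-hand side tends to $0$ with $k$; hence $\eta_i x_i = \eta_{i+1}y_i$ for all $i$, and $\xi_i := \eta_i$ works. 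For the special case, reduce to $I = \bN$ (repeating an index if $I$ is finite) and apply the first part with all spaces equal to $X$ and with the ordered pair of points attached to the $j$-th copy taken to be $(x_j, x_{j+1})$: the resulting $\xi_j \in \widehat{G}_L$ satisfy $\xi_j x_j = \xi_{j+1}x_{j+1}$ for every $j$, so all the $\xi_i x_i$ coincide with $z := \xi_1 x_1$.

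\emph{Main difficulty.} The argument is essentially a compactness/limit argument: minimality supplies only \emph{approximate} solutions inside $G$, and the sole substantial point is that Roelcke-precompactness, via the compactness of $\widehat{G}_L^{\bN}\sslash G$, is precisely what upgrades a sequence of such approximate solutions to an honest one in $\widehat{G}_L$. Establishing that compactness, and checking that the extended $\widehat{G}_L$-action on each $X_i$ behaves well under the limit (which is where uniform continuity of the orbit maps and isometricity of the $G$-action enter), is the bulk of the (modest) work.
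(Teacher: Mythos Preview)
Your proof is correct and follows essentially the same route as the paper's: build approximate solutions in $G$ by composing elements that move each $y_i$ close to $x_i$, invoke compactness of $\widehat{G}_L^{\bN}\sslash G$ to extract a limit, and read off exact equalities; the special case is deduced identically by taking $y_i = x_{i+1}$. The only difference is expository: you spell out the compactness of $\widehat{G}_L^{\bN}\sslash G$ and the uniform continuity of the orbit maps, which the paper leaves implicit.
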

\begin{proof}
  For each $i$ there exist $g_{i,n} \in G$ such that $g_{i,n} y_i \rightarrow x_i$.
  Choose $f_{i,n} \in G$ such that $f_{i+1,n} = f_{i,n} g_{i,n}$, so $d(f_{i+1,n} y_i,f_{i,n} x_i) \rightarrow 0$ as $n \rightarrow \infty$.
  By compactness of $\widehat{G}_L^\bN \sslash G$ we may assume that $[f_{0,n},f_{1,n},\ldots] \rightarrow [\xi_0,\xi_1,\ldots]$, and the $\xi_i$ are as desired.

  The special case follows since we may assume that $I = \bN$ and take $y_i = x_{i+1}$.
\end{proof}

\begin{prp}
  \label{prp:GSpaceMorphism}
  Let $G$ be a Roelcke-precompact Polish group, $X$ and $Y$ minimal complete $G$-spaces, and let $\pi\colon Y \rightarrow X$ be a morphism of $G$-spaces.
  \begin{enumerate}
  \item The map $\pi$ is uniformly continuous.
  \item If $\pi$ is injective then it is surjective and $\pi^{-1}$ is uniformly continuous as well.
  \item If all fibres of $\pi$ are compact (possibly empty) then the fibre map $x \mapsto \pi^{-1} x$ is uniformly continuous in the Hausdorff distance.
    In particular, $\pi$ is surjective and all its fibres are isometric.
  \end{enumerate}
\end{prp}
\begin{proof}
  Since $Y \sslash G$ is a singleton and $G$ Roelcke-precompact, $Y^2 \sslash G$ is compact as well.

  Assume $\pi$ is not uniformly continuous, i.e., that there exist $(y_n,z_n) \in Y^2$ such that $d(y_n,z_n) \rightarrow 0$ and yet $d(\pi y_n, \pi z_n) \geq \varepsilon > 0$ for all $n$.
  Possibly passing to a sub-sequence, we may assume that $[y_n,z_n] \rightarrow [y,z] \in Y^2 \sslash G$, i.e., that there exist $g_n \in G$ such that $g_n y_n \rightarrow y$ and $g_n z_n \rightarrow z$.
  Since the actions are isometric and commute with $\pi$ we may replace $(y_n,z_n)$ with $(g_n y_n, g_n z_n)$, so $(y_n,z_n) \rightarrow (y,z)$.
  On the one hand, $d(y_n,z_n) \rightarrow 0$ implies $y = z$, and on the other hand $d(\pi y, \pi z) \geq \varepsilon$, a contradiction.

  Assume now that $\pi$ is injective, but there exist $(y_n,z_n) \in Y^2$ such that $d(y_n,z_n) \geq \varepsilon > 0$ and yet $d(\pi y_n, \pi z_n) \rightarrow 0$.
  As above we may assume that $(y_n,z_n) \rightarrow (y,z)$, so $d(y,z) \geq \varepsilon$ and $\pi y = \pi z$, contradicting our hypothesis.
  We conclude that $\pi^{-1}$ is uniformly continuous where defined.
  Now let $x \in X$ and $y \in Y$ be arbitrary.
  By minimality, there exists a sequence $g_n \in G$ such that $g_n \pi y = \pi g_n y \rightarrow x$.
  The sequence $(g_n y)$ is therefore Cauchy and converges to some $z \in Y$.
  By continuity, $\pi z = x$, so $\pi$ is surjective.

  Finally, assume that all fibres of $\pi$ are compact.
  Let $Z_0 = K(Y)$ be the space of non-empty compact sub-spaces of $Y$ equipped with the Hausdorff distance and the natural action of $G$.
  It is straightforward to check that $Z_0$ is a complete $G$-space.
  Choose any $x \in X$ such that $\pi^{-1} x \neq \emptyset$ (one such must exist since $Y \neq \emptyset$) and let $Z_x = [\pi^{-1} x] \subseteq Z_0$.
  Then $Z_x$ is a minimal complete $G$-space, and the image under $\pi$ of any $S \in Z_x$ is a singleton in $X$, giving rise to a map $\overline{\pi} \colon Z_x \rightarrow X$.
  One easily checks that it is injective, continuous and respects the action.
  By the second item, $\overline{\pi}$ is bijective and uniformly continuous in both direction, and everything else follows.
\end{proof}

\begin{fct}
  \label{fct:Aleph0CategoricalImaginary}
  Let $\fM$ be $\aleph_0$-categorical and $G = \Aut(\fM)$.
  Let also $a \in \fM^\meq$ and $X = \{b \in \fM^\meq : b \equiv a\}$.
  Then $X$ is a minimal complete $G$-space (so in particular $X = [a]$).
  Conversely, every minimal complete $G$-space arises (up to an isometric bijection) in this fashion.

  Moreover, if $a,a' \in \fM^\meq$, then
  \begin{enumerate}
  \item We have $a' \in \dcl^\meq(a)$ if and only if there exists a (necessarily unique) morphism $\pi\colon [a] \rightarrow [a']$ sending $a \mapsto a'$.
  \item We have both $a' \in \dcl^\meq(a)$ and $a \in \acl^\meq(a')$ if and only if, in addition, $\pi$ has compact fibres.
  \end{enumerate}
\end{fct}
\begin{proof}
  The main assertion is easy.
  For the converse see \cite[Proposition~4.2]{BenYaacov-Melleray:Grey}.
  The moreover part is standard model theory.
\end{proof}

Thus an imaginary element in an $\aleph_0$-categorical structure $\fM$ is the same thing as a distinguished point in a minimal $\Aut(\fM)$-space.

\begin{lem}
  \label{lem:Aleph0CategoricalEMI}
  Let $\fM$ be $\aleph_0$-categorical and $G = \Aut(\fM)$.
  Then $\fM$ has EMI (WEMI) if and only if for every minimal complete $G$-space $X$ there exists a countable tuple $a \in \fM^\bN$ and a morphism of $G$-spaces $\pi\colon [a] \rightarrow X$ such that $\pi$ is injective (has compact fibres).
\end{lem}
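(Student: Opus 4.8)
The plan is to restate everything in the language of $G$-spaces via \autoref{fct:Aleph0CategoricalImaginary} and then read off the equivalence, treating the EMI and WEMI versions in parallel. By that fact the minimal complete $G$-spaces are, up to isometric $G$-isomorphism, exactly the orbit closures $[a]$ with $a \in \fM^\meq$, and its ``moreover'' part says that $a' \in \dcl^\meq(a)$ iff there is a morphism $[a] \to [a']$ with $a \mapsto a'$, and that, in addition, $a \in \acl^\meq(a')$ iff that morphism has compact fibres. Since injectivity and compactness of fibres are preserved under pre- and post-composition with isometric $G$-isomorphisms, the condition displayed in the lemma is equivalent to: for every $a \in \fM^\meq$ there is a countable tuple $c \in \fM^\bN$ and a morphism $\pi\colon [c] \to [a]$ which is injective (has compact fibres). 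Moreover any such $\pi$ is surjective --- \autoref{prp:GSpaceMorphism}(ii) in the injective case and (iii) in the compact-fibre case --- so, choosing $c' \in [c]$ with $\pi(c') = a$ and noting that $[c'] = [c]$ since $c' \equiv c$ (and that $[c] \subseteq \fM^\bN$, being the set of realisations of $\tp(c)$), I may assume from the outset that $\pi(c) = a$.

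For the EMI version: if $\fM$ has EMI, fix $a \in \fM^\meq$ and a set $A \subseteq \fM$ interdefinable with $a$; as $a$ is already definable over a countable subset of $A$ and $A \subseteq \dcl^\meq(a)$, I may take $A$ countable and let $c \in \fM^\bN$ enumerate it, so $\dcl^\meq(c) = \dcl^\meq(a)$. From $a \in \dcl^\meq(c)$ and $c \in \dcl^\meq(a)$, \autoref{fct:Aleph0CategoricalImaginary}(i) produces morphisms $\pi\colon[c] \to [a]$ and $\sigma\colon[a] \to [c]$ with $\pi(c) = a$ and $\sigma(a) = c$; the uniqueness clause forces $\sigma\pi = \id_{[c]}$ and $\pi\sigma = \id_{[a]}$, so $\pi$ is bijective, in particular injective. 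Conversely, given an injective $\pi\colon[c] \to [a]$ with $\pi(c) = a$, \autoref{prp:GSpaceMorphism}(ii) makes $\pi^{-1}$ a morphism with $\pi^{-1}(a) = c$, and applying \autoref{fct:Aleph0CategoricalImaginary}(i) to $\pi$ and to $\pi^{-1}$ gives $\dcl^\meq(a) = \dcl^\meq(c)$; the set of coordinates of $c$ is then the required EMI witness for $a$.

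For the WEMI version the argument is identical with \autoref{fct:Aleph0CategoricalImaginary}(ii) in place of (i). Starting from a WEMI witness $A \subseteq \acl^\meq(a)$ with $a \in \dcl^\meq(A)$, cut down to a countable $A_0 \subseteq A$ over which $a$ is still definable and enumerate it as $c$; one needs that $c \in \acl^\meq(a)$, which holds because a countable tuple whose coordinates are each algebraic over $a$ is itself algebraic over $a$ --- its conjugacy class over $a$ sits inside the product of the compact conjugacy classes of the coordinates and is closed there, hence compact, in every elementary extension. Then \autoref{fct:Aleph0CategoricalImaginary}(i)--(ii) turn $a \in \dcl^\meq(c)$ and $c \in \acl^\meq(a)$ into a morphism $\pi\colon[c] \to [a]$ with compact fibres; conversely such a $\pi$ (with $\pi(c) = a$) yields $a \in \dcl^\meq(c)$ and $c \in \acl^\meq(a)$, and since subtuples of algebraic tuples are algebraic, the set of coordinates of $c$ is contained in $\acl^\meq(a)$ and defines $a$, so it is a WEMI witness.

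I do not anticipate a real obstacle: all the substance is in \autoref{fct:Aleph0CategoricalImaginary} and \autoref{prp:GSpaceMorphism}, which are available, together with the ``countably many parameters suffice'' reductions around \autoref{lem:Countable}. The one spot demanding attention --- the nearest thing to a pitfall --- is the normalisation $\pi(c) = a$: a morphism of $G$-spaces need not carry a prescribed point to a prescribed point, so one must use surjectivity to re-select the tuple and verify it still lies in $\fM^\bN$; once that is in place the whole lemma is a transcription of the two quoted facts.
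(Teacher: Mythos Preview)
Your proposal is correct and follows exactly the route the paper intends: its proof reads in full ``By \autoref{prp:GSpaceMorphism} and \autoref{fct:Aleph0CategoricalImaginary}'', and you have unpacked precisely that, including the countability reductions and the normalisation $\pi(c)=a$ via surjectivity. The only remark is that your discussion is considerably more detailed than what the paper records, but nothing in it deviates from or adds to the intended argument.
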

\begin{proof}
  By \autoref{prp:GSpaceMorphism} and \autoref{fct:Aleph0CategoricalImaginary}.
\end{proof}

For a structure $\fM$ and a subset $A \subseteq \fM^\meq$, let $(\fM,A)$ denote $\fM$ with all members of $A$ named (one does not have to add new sorts for this -- rather, for each $a \in A \cap \fM_\rho$ name all the predicates $P_{\rho,n}(x,a)$, and by \autoref{lem:Countable} it suffices to name countably many such predicates).

\begin{lem}
  \label{lem:Aleph0CategoricalNamedTransitive}
  Let $\fM$ be an $\aleph_0$-categorical structure in a countable language and $a \in \fM^\meq$.
  The structure $(\fM,a)$ is $\aleph_0$-categorical if and only if $G \curvearrowright [a]$ is transitive.
\end{lem}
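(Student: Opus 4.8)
The plan is to prove both implications separately; throughout, $G=\Aut(\fM)$, which is a Roelcke-precompact Polish group since $\fM$ is $\aleph_0$-categorical in a countable language, and $G_a\le G$ denotes the stabiliser of $a$. Two preliminary observations: first, $\Aut(\fM,a)=G_a$, because an automorphism of $\fM$ preserves the predicates naming $a$ if and only if it fixes $a$ (those predicates separate the points of the sort containing $a$); second, by \autoref{fct:Aleph0CategoricalImaginary} the set $[a]=\{b\in\fM^\meq:b\equiv a\}$ is a minimal complete $G$-space, and it is Polish (it is complete, and separable, being the orbit closure of a point under the separable group $G$).

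For ``$\Rightarrow$'', assume $(\fM,a)$ is $\aleph_0$-categorical and let $b\in[a]$; I must produce $g\in G$ with $ga=b$. Apply the special case of \autoref{lem:RoelckePrecompactAmalgamation} to the minimal complete $G$-space $X=[a]$ and the two points $a,b$: there are $\xi_0,\xi_1\in\widehat G_L$ and $z\in[a]$ with $\xi_0a=z=\xi_1b$. Each $\xi_i$, being a pointwise limit of automorphisms (the left uniformity of $\Aut(\fM)$ is that of pointwise convergence), acts on $\fM^\meq$ as the canonical extension of an elementary self-embedding of $\fM$; write $\fM_i=\xi_i(\fM)\preceq\fM$. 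Then $\xi_0$ is an isomorphism of expanded structures $(\fM,a)\xrightarrow{\ \sim\ }(\fM_0,z)$ and $\xi_1$ one $(\fM,b)\xrightarrow{\ \sim\ }(\fM_1,z)$; since $z$ lies in $\fM_i^\meq$ and $\fM_i\preceq\fM$ we get $(\fM_i,z)\preceq(\fM,z)$, whence
\[
  (\fM,a)\ \equiv\ (\fM,z)\ \equiv\ (\fM,b).
\]
Thus $(\fM,a)$, $(\fM,b)$ and $(\fM,z)$ are separable models of the single theory $\Th(\fM,a)$, which is $\aleph_0$-categorical; they are therefore pairwise isomorphic. An isomorphism $(\fM,a)\cong(\fM,b)$ is exactly an automorphism of $\fM$ carrying $a$ to $b$, so $G\curvearrowright[a]$ is transitive.

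For ``$\Leftarrow$'', assume $G\curvearrowright[a]$ is transitive. By \autoref{fct:RyllNardzewski} applied to $(\fM,a)$ (whose underlying structure is $\fM$ and whose automorphism group is $G_a$), it suffices to prove that $\fM^n\sslash G_a$ is compact for every $n$. Fix $n$ and consider the complete $G$-space $Y=[a]\times\fM^n$; since $G$ is Roelcke-precompact and both $[a]\sslash G$ (a singleton) and $\fM^n\sslash G$ are compact, so is $Y\sslash G$. The assignment $\bar c\mapsto(a,\bar c)$ induces a continuous, distance-non-increasing map $\iota\colon\fM^n\sslash G_a\to Y\sslash G$, and $\iota$ is surjective because transitivity lets us move the first coordinate of any $(b,\bar c)\in Y$ back to $a$. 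Now invoke the Effros open mapping theorem: the continuous transitive action of the Polish group $G$ on the Polish space $[a]$ is micro-transitive, i.e.\ $Ua$ is a neighbourhood of $a$ in $[a]$ for every neighbourhood $U$ of $e$. A routine argument then shows $\iota^{-1}$ is continuous (given $\varepsilon>0$ and a fixed $\bar c$, choose $U\ni e$ moving $\bar c$ by less than $\varepsilon/2$ and then $\delta>0$ with $B(a,\delta)\subseteq Ua$; if $g\in G$ moves $a$ by less than $\delta$ and $\bar c'$ to within $\delta$ of $\bar c$, correct $g$ by some $k\in U$ with $ka=ga$ to get $h=k^{-1}g\in G_a$ with $d(\bar c,h\bar c')<\varepsilon$). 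Hence $\iota$ is a homeomorphism onto the compact space $Y\sslash G$, and $\fM^n\sslash G_a$ is compact.

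I expect the main obstacle to be the ``$\Leftarrow$'' direction: turning bare transitivity of $G\curvearrowright[a]$ into compactness of the $G_a$-orbit space requires straightening an automorphism that merely almost fixes $a$ into one that fixes it exactly, which is precisely micro-transitivity, and I see no way to avoid appealing to the Effros theorem (or reproving it by a Baire-category argument on the Polish space $[a]$). The ``$\Rightarrow$'' direction, by contrast, is soft once \autoref{lem:RoelckePrecompactAmalgamation} is in hand. One could instead run ``$\Leftarrow$'' through ``$\aleph_0$-categorical $=$ unique separable model'' as well, using that $\Th(\fM,c)$ determines $\tp^\meq(c)$ and that every separable model of $\Th(\fM,a)$ recovers some $\fM'\cong\fM$ together with an imaginary conjugate to $a$; that variant trades Effros for these two standard facts about metric imaginaries.
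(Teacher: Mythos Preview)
Your argument is correct, but it is considerably more elaborate than the paper's. The paper dispatches both directions in a single line via the ``unique separable model'' characterisation, precisely the variant you mention at the end as an alternative: since $\fM$ is $\aleph_0$-categorical, the separable models of $\Th(\fM,a)$ are exactly the structures $(\fM,b)$ with $b\in[a]$ (because $b\equiv a$ is the same as $\Th(\fM,b)=\Th(\fM,a)$, and the $\fM$-reduct of any separable model is isomorphic to $\fM$). So $(\fM,a)$ is $\aleph_0$-categorical if and only if $(\fM,a)\cong(\fM,b)$ for every $b\in[a]$, which is transitivity.

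In particular, your ``$\Rightarrow$'' detour through \autoref{lem:RoelckePrecompactAmalgamation} and elementary self-embeddings by $\widehat G_L$ is unnecessary: the fact $(\fM,a)\equiv(\fM,b)$ is immediate from $b\in[a]=\{b:b\equiv a\}$ (\autoref{fct:Aleph0CategoricalImaginary}), without any amalgamation. Your ``$\Leftarrow$'' via Effros and approximate oligomorphicity is a genuine alternative route, and what it buys is a proof that stays entirely on the topological-dynamical side (compactness of $\fM^n\sslash G_a$), avoiding the model-theoretic fact that every separable model of $\Th(\fM,a)$ has $\fM$-reduct isomorphic to $\fM$. The cost is the Effros machinery, which the paper's argument does not need at all here.
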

\begin{proof}
  Since $\fM$ is $\aleph_0$-categorical, every separable model of $\Th(\fM,a)$ is isomorphic to $(\fM,b)$ for some $b \in [a]$, and conversely, every such $(\fM,b)$ is a model.
  Thus $(\fM,a)$ is $\aleph_0$-categorical if and only if $(\fM,a) \cong (\fM,b)$ for every $b \in [a]$, i.e., if and only if $G \curvearrowright [a]$ is transitive.
\end{proof}

\begin{prp}
  \label{prp:Aleph0CategoricalNamedACL}
  Let $\fM$ be an $\aleph_0$-categorical structure in a countable language.
  Then $(\fM,A)$ is $\aleph_0$-categorical for any $A \subseteq \acl^\meq(\emptyset)$.

  Consequently, if $(\fM,A)$ is $\aleph_0$-categorical for some $A \subseteq \fM^\meq$ then $(\fM,B)$ is $\aleph_0$-categorical for any $B \subseteq \acl^\meq(A)$.
\end{prp}
\begin{proof}
  Define a binary predicate $P(x,y)$ as the distance between $\tp(x/A)$ and $\tp(y/A)$.
  Then $P$ is metrically continuous and automorphism-invariant on $\fM$, so by \autoref{fct:RyllNardzewski} it is definable without parameters.
  Let $G_A = \Aut(\fM,A) = \{g \in G : g\rest_A = \id_A\}$.
  If the space $\fM^k \sslash G_A$ of $k$-types over $A$ is not totally bounded, there exist $\varepsilon > 0$ and a sequence $(a_n)_{n \in \bN}$ in $\fM^k$ such that $P(a_n,a_m) \geq \varepsilon$ for all $n < m$.
  By standard arguments we may assume that the sequence $(a_n)$ is indiscernible, in which case it is also indiscernible over $\acl^\meq(\emptyset)$, contradicting $P(a_0,a_1) > 0$.

  Thus $G_A \curvearrowright \fM$ is approximately oligomorphic and $(\fM,A)$ is $\aleph_0$-categorical.
\end{proof}

\section{Triviality of transitive actions}
\label{sec:TrivialAction}

\begin{thm}
  \label{thm:NoTrans}
  Let $\fM$ be an $\aleph_0$-categorical structure, $G = \Aut(\fM)$.
  Assume that for every $a \in \fM$, either $[a]$ is a singleton (i.e., $a \in \dcl(\emptyset)$) or $G \curvearrowright [a]$ is not transitive.
  Assume moreover that $\fM$ has WEMI.

  Then a transitive complete $G$-space is necessarily a singleton.
\end{thm}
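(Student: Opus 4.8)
The plan is to realise a putative transitive complete $G$-space as an imaginary sort, to descend along a WEMI-morphism to a countable tuple from the home sort, and then to argue coordinate by coordinate, where the hypothesis on elements of $\fM$ can be brought to bear.

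So let $X$ be a transitive, hence minimal, complete $G$-space. By \autoref{fct:Aleph0CategoricalImaginary} I may take $X = [a]$ with $a \in \fM^\meq$ and $G \curvearrowright [a]$ transitive, so that $(\fM,a)$ is $\aleph_0$-categorical by \autoref{lem:Aleph0CategoricalNamedTransitive}. I would then invoke WEMI in the form of \autoref{lem:Aleph0CategoricalEMI}: there are a countable tuple $b \in \fM^\bN$ and a morphism of $G$-spaces $\pi\colon [b] \to [a]$ with compact fibres. Since $[b]$ and $[a]$ are minimal complete, \autoref{prp:GSpaceMorphism} makes such a $\pi$ surjective, so after replacing $a$ by $\pi(b)$ (another point of the single orbit $[a]$, so nothing is lost) I may assume $\pi(b) = a$. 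Then \autoref{fct:Aleph0CategoricalImaginary} yields $a \in \dcl^\meq(b)$ and, from the compactness of the fibres, $b \in \acl^\meq(a)$.

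Next I would pass to the coordinates of $b$. From $b \in \acl^\meq(a)$ together with the $\aleph_0$-categoricity of $(\fM,a)$, \autoref{prp:Aleph0CategoricalNamedACL} gives that $(\fM,b)$ is $\aleph_0$-categorical. Fix an index $i$; I claim $(\fM,b_i)$ is $\aleph_0$-categorical as well. Write $H = \Aut(\fM,b) \leq H' = \Aut(\fM,b_i)$, closed (hence Polish) subgroups of $G$ acting on $\fM$ by restriction. The natural map $\fM^n \sslash H \to \fM^n \sslash H'$ sending an $H$-orbit closure to the $H'$-orbit closure that contains it is surjective and $1$-Lipschitz for the set-distance, since enlarging the acting group only enlarges orbit closures. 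Hence compactness of every $\fM^n \sslash H$ — which is precisely the $\aleph_0$-categoricity of $(\fM,b)$, by \autoref{fct:RyllNardzewski} — forces compactness of every $\fM^n \sslash H'$, i.e. $(\fM,b_i)$ is $\aleph_0$-categorical. By \autoref{lem:Aleph0CategoricalNamedTransitive} once more, $G \curvearrowright [b_i]$ is transitive.

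At this point the hypotheses of the theorem apply to the element $b_i \in \fM$: since $G \curvearrowright [b_i]$ is transitive, $[b_i]$ must be a singleton, i.e. $b_i \in \dcl(\emptyset)$. As this holds for all $i$, the tuple satisfies $b \in \dcl^\meq(\emptyset)$, whence $a \in \dcl^\meq(b) \subseteq \dcl^\meq(\emptyset)$; by \autoref{fct:Aleph0CategoricalImaginary} this says $X = [a]$ is a singleton. I expect the one non-formal step to be the descent to the coordinates: the substantive content there is that enlarging a stabiliser cannot destroy approximate oligomorphy, expressed by the $1$-Lipschitz surjection between orbit-closure spaces (equivalently, the metric continuity and surjectivity of restriction maps between type spaces). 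Everything else is an assembly of \autoref{fct:Aleph0CategoricalImaginary}, \autoref{lem:Aleph0CategoricalNamedTransitive} and \autoref{prp:Aleph0CategoricalNamedACL}, the genuine work being concentrated in the two standing assumptions, WEMI and the failure of transitivity on each $[a]$ with $a \in \fM$.
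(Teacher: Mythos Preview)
Your proof is correct and follows essentially the same route as the paper's. The only differences are cosmetic: you invoke WEMI through the topological characterisation of \autoref{lem:Aleph0CategoricalEMI} rather than the model-theoretic definition, and you spell out the passage from $(\fM,b)$ being $\aleph_0$-categorical to $(\fM,b_i)$ being $\aleph_0$-categorical via the $1$-Lipschitz surjection $\fM^n \sslash H \to \fM^n \sslash H'$, a step the paper simply asserts.
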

\begin{proof}
  Assume $G \curvearrowright X$ is a transitive complete $G$-space.
  By \autoref{fct:Aleph0CategoricalImaginary} we may assume that $X = [a]$ for some $a \in \fM^\meq$.
  By \autoref{lem:Aleph0CategoricalNamedTransitive}, $(\fM,a)$ is $\aleph_0$-categorical.
  By \autoref{prp:Aleph0CategoricalNamedACL}, $\bigl( \fM, \acl^\meq(a) \bigr)$ is $\aleph_0$-categorical.
  By WEMI there exists a set $B \subseteq \fM$ interdefinable with $\acl^\meq(a)$, so $(\fM,B)$ is $\aleph_0$-categorical.
  In particular, $(\fM,b)$ is $\aleph_0$-categorical for any $b \in B$.
  By hypothesis (and \autoref{lem:Aleph0CategoricalNamedTransitive}) we must have $b \in \dcl(\emptyset)$.
  Thus $B \subseteq \dcl(\emptyset)$, and therefore $a \in \dcl^\meq(\emptyset)$.
  We conclude that $X = \{a\}$.
\end{proof}

For every (non-compact) Roelcke-precompact Polish group $G$ there exists a structure $\fM$ satisfying all the hypotheses of \autoref{thm:NoTrans} with the possible exception of WEMI\@.
Indeed, let $\widehat{G}_L = \widehat{(G,d_L)}$ be the left completion of $G$.
For each $n$ and a countable dense family of $a \in G^n$, define a predicate $P_a(x) = d\bigl(x, [a]\bigr)$.
Then $\widehat{G}_L$ together with these predicates forms a separable metric structure $\fM_G$ such that $G = \Aut(\fM)$; if $G$ is Roelcke-precompact then $\fM_G$ is $\aleph_0$-categorical (see \cite{Melleray:Oscillation,BenYaacov-Tsankov:WAP}).
If $G$ is not compact then $G \neq \widehat{G}_L$, so $G$ does \emph{not} act transitively on $\widehat{G}_L$.
On the other hand, since $G$ is dense in $\widehat{G}_L$, we have $\widehat{G}_L \sslash G = \{*\}$, i.e., $[a] = \widehat{G}_L$ for every $a \in \widehat{G}_L$.



In general, if $\fM$ is $\aleph_0$-categorical and $G = \Aut(\fM)$, then $\fM$ is interpretable in $\fM_G$ (see \cite{BenYaacov-Kaichouh:Reconstruction}), and under this interpretation any $a \in \widehat{G}_L$ codes an entire copy of $\fM$.
For example, if $G = \Aut(\ell_2)$ is the unitary group, then members of $\widehat{G}_L$ code infinite-dimensional spaces, so any single vector, viewed as an imaginary in $\fM_G$, witnesses that $\fM_G$ fails WEMI -- which is to be expected, since $G$ does act transitively on the unit ball of $\ell_2$.

Thus, in a sense, the ``tricky'' property is WEMI.
One method for proving WEMI is using \autoref{fct:StableWEMI}, which leads us to the following example.

\begin{fct}
  \label{fct:ALpL}
  Let $1 \leq p < \infty$ and let $ALpL$ be the theory of atomless $L_p$ Banach lattices (or more precisely, of their closed unit balls) in the language $\{0,-,\half[x+y], |{\cdot}|\}$.
  Then
  \begin{enumerate}
  \item The theory $ALpL$ is complete and $\aleph_0$-categorical: all its separable models are isomorphic to $\bigl( L_p(\mu),|{\cdot}| \bigr)$, where $\mu$ denotes the Lebesgue measure on $[0,1]$ and $L_p(\mu)$ is the space of real-valued $p$-summable functions.
    On the other hand, for any single non-zero member $f$ of the home sort, the structure $(\fM,f)$ is not $\aleph_0$-categorical.
  \item The theory $ALpL$ is $\aleph_0$-stable, and admits canonical bases in the home sort.
    Consequently, it has WEMI.
  \end{enumerate}
\end{fct}
\begin{proof}
  For categoricity, stability, canonical bases and so on see \cite{BenYaacov-Berenstein-Henson:LpBanachLattices,BenYaacov:UniformCanonicalBases} (the former uses a formalism that is different from, although equivalent to, our formalism of continuous logic).
  If $f \neq 0$ then $\Th(\fM,f)$ admits at least (in fact, exactly) two non-isomorphic separable models, one in which the support of $f$ has full measure and one in which it does not (i.e., one in which $|f| \wedge |g| = 0$ implies $g = 0$, and one in which it does not).
\end{proof}

\begin{dfn}
  \label{dfn:AutStar}
  Let $\mu$ denote the Lebesgue measure on $[0,1]$.
  We define the group $\Aut^*(\mu)$ to consist of measure-class-preserving transformations of $\mu$, i.e., of bijections $\varphi$ between full measure subsets of $[0,1]$ such that both $\varphi$ and $\varphi^{-1}$ are measurable and respect the null-measure ideal, identified up to equality a.e.\ (see also Pestov \cite[pp.~91--92]{Pestov:Dynamics}).

  Fix $1 \leq p < \infty$.
  For $\varphi \in \Aut^*(\mu)$ and $f \in L_p(\mu)$ define
  \begin{gather*}
    \varphi \cdot_p f = \left( \frac{d \varphi_* \mu}{d \mu}\right)^{\frac{1}{p}} \varphi_* f
  \end{gather*}
  where $\varphi_* \mu(A) = \mu\bigl( \varphi^{-1}(A) \bigr)$ is the image measure, $d \varphi_* \mu / d \mu$ is the Radon-Nikodym derivative, and $\varphi_* f = f \circ \varphi^{-1}$.
  We observe that $\varphi \cdot_p f \in L_p(\mu)$ as well and the the action is isometric.
  We equip $\Aut^*(\mu)$ with the topology of point-wise convergence of its action on, say, $L_2(\mu)$.
\end{dfn}

\begin{thm}
  \label{thm:AutStar}
  \begin{enumerate}
  \item For any $1 \leq p < \infty$, the action $\Aut^*(\mu) \curvearrowright L_p(\mu)$ is by linear isometries, inducing a homeomorphic isomorphism $\Aut^*(\mu) \cong \Aut\bigl( L_p(\mu), |{\cdot}| \bigr)$.
    In particular, $\Aut^*(\mu)$ is a Polish and Roelcke-precompact topological group.
  \item Any transitive complete $\Aut^*(\mu)$-space is a singleton.
  \end{enumerate}
\end{thm}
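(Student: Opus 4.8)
The plan is to treat the two parts separately, with part~(1) supplying the concrete identification that lets part~(2) reduce to the abstract machinery of \autoref{sec:TrivialAction}. For part~(1), I would first read off directly from \autoref{dfn:AutStar} that $\varphi \cdot_p$ is linear in $f$ (the Radon--Nikodym factor does not depend on $f$), that it is isometric (a change-of-variables computation: $\|\varphi \cdot_p f\|_p^p = \int |f \circ \varphi^{-1}|^p \, d\varphi_*\mu = \int |f|^p \, d\mu$), and that it commutes with $|{\cdot}|$, hence with $\wedge$ and $\vee$. Thus $\varphi \mapsto (\varphi \cdot_p)$ is a homomorphism $\Aut^*(\mu) \to \Aut\bigl(L_p(\mu),|{\cdot}|\bigr)$, and it is injective because $\varphi$ is recovered from its action on the characteristic functions of a countable generating Boolean subalgebra of the measure algebra. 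For surjectivity I would invoke the Banach--Lamperti description of the lattice isometries of $L_p$: every surjective isometry of $L_p(\mu)$ preserving the lattice structure is a weighted composition operator of exactly this shape (for $p \neq 2$ the lattice hypothesis is automatic, while for $p = 2$ it is essential, since without it $\Aut(L_2)$ is the full unitary group).

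It then remains, for part~(1), to check that this bijection is a homeomorphism, i.e.\ to compare pointwise convergence of the action on $L_2(\mu)$ (the topology placed on $\Aut^*(\mu)$) with pointwise convergence on $L_p(\mu)$ (the topology on $\Aut(L_p(\mu),|{\cdot}|)$ as an automorphism group of a metric structure). For weighted composition operators these agree: it suffices to test convergence on characteristic functions, on which all the $L_q$-norms are equivalent, and the Radon--Nikodym weights are controlled by the images of $\mathbf 1_{[0,1]}$ and of characteristic functions of a generating family. Finally $\Aut(L_p(\mu),|{\cdot}|)$ is Polish, being the automorphism group of a separable metric structure, and Roelcke-precompact by \cite{BenYaacov-Tsankov:WAP}, since the theory $ALpL$ is $\aleph_0$-categorical by \autoref{fct:ALpL}(1).

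For part~(2), take $\fM = \bigl(L_p(\mu),|{\cdot}|\bigr)$, a separable model of $ALpL$, and $G = \Aut(\fM)$, which by part~(1) is topologically isomorphic to $\Aut^*(\mu)$; in particular transitive complete $\Aut^*(\mu)$-spaces and transitive complete $G$-spaces are literally the same objects. It now suffices to verify the hypotheses of \autoref{thm:NoTrans}. The structure $\fM$ is $\aleph_0$-categorical by \autoref{fct:ALpL}(1). For the orbit condition: $0 \in \dcl(\emptyset)$ since every automorphism is linear, so $[0]$ is a singleton; and for any $f \neq 0$ in the home sort, $(\fM,f)$ is not $\aleph_0$-categorical by \autoref{fct:ALpL}(1), hence $G \curvearrowright [f]$ is not transitive by \autoref{lem:Aleph0CategoricalNamedTransitive}. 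Finally $\fM$ has WEMI by \autoref{fct:ALpL}(2). \autoref{thm:NoTrans} then yields that every transitive complete $G$-space, equivalently every transitive complete $\Aut^*(\mu)$-space, is a singleton.

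The genuinely delicate point is the homeomorphism claim in part~(1): the identification of $\Aut^*(\mu)$ with a group of weighted composition operators on $L_p$ is classical, but one must be careful that the two natural Polish topologies coincide, i.e.\ that pointwise convergence of these operators is insensitive to the exponent $p$ and to whether one tests on characteristic functions or on all of $L_p$. Everything in part~(2), by contrast, is bookkeeping against \autoref{thm:NoTrans} and \autoref{fct:ALpL}; the conceptual work there was already carried out in \autoref{sec:Imaginaries} and \autoref{sec:TrivialAction}.
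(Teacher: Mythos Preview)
Your proof of part~(ii) is essentially identical to the paper's: both reduce to \autoref{thm:NoTrans} via \autoref{fct:ALpL}, and your explicit unpacking of the hypotheses (including the appeal to \autoref{lem:Aleph0CategoricalNamedTransitive} for $f \neq 0$) is exactly what the paper's one-line citation encapsulates.

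For part~(i) the overall architecture agrees (monomorphism, then surjectivity, then homeomorphism), but the two tricky steps are handled differently. For surjectivity the paper does not cite Banach--Lamperti; it constructs the inverse by hand: given $\varphi \in \Aut\bigl(L_p(\mu),|{\cdot}|\bigr)$, set $\tilde\varphi(A) = \supp \varphi(\mathbf 1_A)$, define $\psi(r) = \inf\bigl\{q \in \mathbb Q \cap [0,1] : r \in \tilde\varphi([0,q])\bigr\}$, and check that $\psi \in \Aut^*(\mu)$ with $\psi \cdot_p = \varphi$. For the homeomorphism the paper sidesteps your direct comparison of the $L_p$-topologies entirely: it observes that $\bigl(L_p(\mu),|{\cdot}|\bigr)$ is definable in $\bigl(L_2(\mu),|{\cdot}|\bigr)$ via the map $f \mapsto \sgn(f)\,|f|^{2/p}$ (citing \cite{BenYaacov:UniformCanonicalBases}), and that this map intertwines the actions $\cdot_2$ and $\cdot_p$, so the pointwise-convergence topologies agree by transport of structure. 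Your approach can be pushed through, but the phrase ``on which all the $L_q$-norms are equivalent'' is not literally correct ($\|\mathbf 1_A\|_q = \mu(A)^{1/q}$), and once the Radon--Nikodym weights enter, the comparison needs real work; the paper's bi-interpretability trick avoids this altogether. What your route buys is independence from the cited definability lemma; what the paper's route buys is a clean one-line reduction with no analysis.
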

\begin{proof}
  The action clearly respects the Banach lattice structure, and is isometric.
  This yields a group monomorphism $\Aut^*(\mu) \hookrightarrow \Aut\bigl( L_p(\mu), |{\cdot}| \bigr)$.
  Assume conversely that $\varphi \in \Aut\bigl( L_p(\mu), |{\cdot}| \bigr)$.
  For measurable $A \subseteq [0,1]$ let $\tilde{\varphi}(A) = \supp \varphi(\bone_A)$ (or, equivalently, the support of $\varphi(f)$ where $\supp f = A$).
  Define
  \begin{gather*}
    \psi\colon [0,1] \rightarrow [0,1], \qquad \psi(r) = \inf \, \bigl\{ q \in [0,1] \cap \bQ : r \in \tilde{\varphi}\bigl([0,q]\bigr) \bigr\},
  \end{gather*}
  so $\psi^{-1}(A) = \tilde{\varphi}(A)$ a.s.\ for all measurable $A$.
  Then $\psi \in \Aut^*(\mu)$ and $\psi \cdot_p f = \varphi(f)$, first for $f$ of the form $\varphi^{-1}(\bone_A)$ and then for arbitrary $f$, whence the isomorphism $\Aut^*(\mu) \cong \Aut\bigl( L_p(\mu), |{\cdot}| \bigr)$.
  It is by definition homeomorphic for $p = 2$, so $\Aut^*(\mu)$ is the topological automorphism group of a separable $\aleph_0$-categorical structure.
  It is therefore Polish and Roelcke-precompact (see \cite{BenYaacov-Tsankov:WAP}).

  Since $\bigl( L_p(\mu), |{\cdot}| \bigr)$ is definable in $\bigl( L_2(\mu), |{\cdot}| \bigr)$ via the map $f \rightarrow \sgn(f) \cdot |f|^{2/p}$ (see \cite[Lemma~3.3]{BenYaacov:UniformCanonicalBases}), and since this sends the action $\cdot_2$ to $\cdot_p$, the topologies agree for all $p$.

  The second item is by \autoref{thm:NoTrans} and \autoref{fct:ALpL}.
\end{proof}

\begin{rmk}
  \label{rmk:AutStar}
  Since $\bigl( L_p(\mu), |{\cdot}|\bigr)$ is stable, the WAP compactification of $\Aut^*(\mu)$ agrees with its Roelcke compactification.
\end{rmk}

Another natural potential example is the Gurarij space $\bG$ (see \cite{Gurarij:UniversalPlacement,Lusky:UniqueGurarij} for the general theory and \cite{BenYaacov-Henson:Gurarij} for the model-theoretic aspects).
It is $\aleph_0$-categorical and eliminates quantifiers in the natural Banach space language, so the orbit closure of any $a \in \bG \setminus \{0\}$ is the sphere of radius $\|a\|$.
Since it contains both smooth and non-smooth vectors, the isometric automorphism group $\Aut(\bG)$ does \emph{not} act transitively on the sphere.
WEMI is significantly more complicated, though: the Gurarij space is as far from stable as possible (it has TP2), so \autoref{fct:StableWEMI} is of no help.
On the other hand, it seems structurally simple enough that no ``complicated'' imaginary sorts should exist, and we expect that an explicit analysis of imaginary sorts is possible.

\begin{conj}
  The Gurarij space $\bG$ has WEMI and every transitive complete $\Aut(\bG)$-space is a singleton.
\end{conj}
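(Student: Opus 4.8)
The conjecture naturally splits: weak elimination of metric imaginaries (WEMI) for $\bG$ is the substantial part, and the triviality of transitive actions then follows formally from \autoref{thm:NoTrans}. For that implication one only needs to check the remaining hypothesis of \autoref{thm:NoTrans}, namely that for every vector $a \in \bG$ either $[a]$ is a singleton or $\Aut(\bG) \curvearrowright [a]$ is not transitive. For $a = 0$ this is trivial; for $a \neq 0$, quantifier elimination in the Banach space language gives $[a] = \{b : \|b\| = \|a\|\}$, and since linear isometries preserve smoothness of the norm while $\bG$ contains both smooth and non-smooth vectors of norm $\|a\|$, the orbit $\Aut(\bG)\,a$ is a proper subset of $[a]$. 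So the whole problem reduces to WEMI for $\bG$.

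The plan for WEMI is to carry out the explicit analysis of imaginary sorts announced above, organised around \autoref{lem:Aleph0CategoricalEMI}: it suffices to show that every minimal complete $\Aut(\bG)$-space $X$ admits a morphism with compact fibres from $[\bar a]$ for some countable tuple $\bar a$ from $\bG$, equivalently (via \autoref{fct:Aleph0CategoricalImaginary}) that every $a \in \bG^\meq$ has a countable real tuple $\bar a$ with $a \in \dcl^\meq(\bar a)$ and $\bar a \in \acl^\meq(a)$. The first step is to describe the definable pseudo-distances on $\bG^\bN$: by \autoref{fct:RyllNardzewski} these coincide with the $\Aut(\bG)$-invariant continuous pseudo-distances, and quantifier elimination forces such a $\rho(\bar x,\bar y)$ to be a continuous function of the isometric type of the marked configuration $\overline{\Span}(\bar x\,\bar y)$. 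From this one wants a normal form showing every imaginary is \emph{geometric} — coded by a marked (finite- or infinite-dimensional) subspace of $\bG$, possibly together with further $\Aut(\bG)$-invariant geometric decorations (sub- and quotient spaces, distinguished directions, and so on).

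The second step is the passage from a geometric imaginary to real points. For an imaginary that, after replacement by its algebraic closure, reduces to a marked \emph{finite}-dimensional subspace $V \leq \bG$, take $\bar a$ to be a basis of $V$: then $V \in \dcl^\meq(\bar a)$, while $\bar a \in \acl^\meq(V)$ because the setwise stabiliser of $V$ in $\Aut(\bG)$ acts on $V$ through a subgroup of the linear isometry group of $V$, which is compact, so the orbit of $\bar a$ is compact. The delicate case is a genuinely infinite-dimensional ``subspace'' imaginary $W$, which does arise (take $W$ an isometric copy of $\ell_2$ inside $\bG$): here $W \in \dcl^\meq(A)$ for any dense sequence $A$ spanning $W$, but $A \subseteq \acl^\meq(W)$ requires the image of $\mathrm{Stab}_{\Aut(\bG)}(W)$ in the linear isometry group of $W$ to act on $W$ with compact orbits. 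Proving this comes down to controlling which self-isometries of an embedded separable subspace lift to global automorphisms of $\bG$; I expect that it is exactly here, rather than on the finite-dimensional side, that the real content of WEMI lies, and where the main obstacle will be. The hope is that the amalgamation property defining $\bG$, while granting \emph{approximate} extension of every finite-dimensional partial isometry, nonetheless constrains the \emph{exact} self-isometries of a given separable subspace that extend, so that they form a group with compact orbits.

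A final, pervasive difficulty is that none of the structure theory underlying this analysis can be borrowed from stability: $\bG$ has TP$_2$, so neither \autoref{fct:StableWEMI} nor the canonical-base machinery of stable theories is available. The plan is therefore to replace the canonical base by a hands-on substitute built from the canonical amalgamation in the age of $\bG$ — either a suitable independence/canonical-amalgamation notion on $\bG$ in which the type of a tuple over a parameter set is ``based'' on an explicitly described geometric object, or a direct Fraïssé-theoretic analysis of the class of finite-dimensional normed spaces with the amalgamation underlying $\bG$, from which the $\Aut(\bG)$-invariant continuous pseudo-distances, hence the imaginary sorts, can be read off. Carrying this out with estimates uniform enough to survive the passage to completion is the step I expect to be hardest.
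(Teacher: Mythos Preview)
The statement is presented in the paper as an open \emph{conjecture}, not a theorem: the paper offers no proof, only the remark that $\bG$ has TP2 (so \autoref{fct:StableWEMI} is unavailable) together with the expectation that ``an explicit analysis of imaginary sorts is possible''. There is therefore no paper proof to compare your proposal against.

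Your outline is a reasonable programme and is fully consonant with the paper's brief discussion. The reduction of the second clause to WEMI via \autoref{thm:NoTrans}, using that the sphere of radius $\|a\|$ contains both smooth and non-smooth vectors, is exactly the argument the paper sketches in the paragraph preceding the conjecture; your identification of the obstacles --- the failure of stability-based tools, the need for a direct classification of $\Aut(\bG)$-invariant pseudo-distances via quantifier elimination, and the passage from geometric imaginaries back to real tuples --- matches the paper's assessment. But be clear that what you have written is a plan, not a proof: the ``hope'' that the setwise stabiliser in $\Aut(\bG)$ of an embedded infinite-dimensional separable subspace acts on it with compact orbits is precisely the substantive assertion that would need to be established, and neither the paper nor your proposal supplies an argument for it (nor is it obvious --- approximate homogeneity of $\bG$ pulls in the opposite direction). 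As of the paper, the conjecture is open.
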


\section{The case of $G = \Homeo^+[0,1]$}
\label{sec:H01}

In this section we concentrate on the group $G = \Homeo^+[0,1]$ of increasing self-homeomorphisms of the interval, equipped with the topology of uniform convergence.
It is Roelcke-precompact (see Megrelishvili \cite{Megrelishvili:EverySemitopologicalSemigroup}, this will also follow easily from the present analysis), and like $\Aut(\mu)$ and $\Aut^*(\mu)$, is given, at least at a first time, via its action on the ``dual space''.
While its topology coincides with point-wise convergence of its action on $C[0,1]$, this action is not approximately oligomorphic -- that is to say that $C[0,1]$ is ``too big''.
One possible solution is to restrict the action to a single (or few) orbit closures, e.g., to $[\id_{[0,1]}]$ -- and this just boils down to the tautological action $G \curvearrowright \widehat{G}_L$.
In other words, the most natural $\aleph_0$-categorical $\fM$ for which $G = \Aut(\fM)$ is, as far as we can see, none other than the ``tautological'' structure $\fM_G$ referred to earlier.
Exceptionally for this group, the tautological structure turns out to be useful.

Let $\fM$ be the space of all continuous, weakly increasing surjective maps $\xi\colon [0,1] \rightarrow [0,1]$.
Equipped with the metric of uniform convergence $d(\xi,\zeta) = \|\xi-\zeta\|_\infty$ it is complete, and it is naturally a metric $G$-space with the action $g \cdot \xi = \xi \circ g^{-1}$.
Notice that the embedding $G \hookrightarrow \fM$, $g \rightarrow g^{-1}$ induces a left-invariant compatible distance on $G$, and its image in $\fM$ is dense, so $\fM$ may indeed be viewed as the left completion $\widehat{G}_L$, or, if we choose the more natural identity embedding, the right completion $\widehat{G}_R$.
As per the remarks following \autoref{thm:NoTrans}, we may name countable many predicates on $\fM$ so that $G = \Aut(\fM)$.

Let us describe the set $\fM^n \sslash G$ (the space of $n$-types) explicitly.

\begin{dfn}
  \label{dfn:H01TypeSpaces}
  \begin{enumerate}
  \item Given $\xi \in \fM$ we may define a partial inverse $\xi^*\colon [0,1] \rightarrow [0,1]$ such that $\xi \circ \xi^* = \id$ and $\xi^*$ is left-continuous (which entirely determines $\xi^*$).
  \item For $n \geq 1$ and $\xi \in \fM^n$ define
    \begin{gather*}
    \mu(\xi) = \frac{1}{n}\sum \xi_i,
    \qquad
    \mu^*(\xi) = \bigl( \mu(\xi) \bigr)^*,
    \qquad
    S_n = \bigl\{ \xi \in \fM^n : \mu(\xi) = \id \bigr\}.
  \end{gather*}
  \end{enumerate}
\end{dfn}

\begin{thm}
  \label{thm:H01TypeSpaces}
  Let $n \geq 1$.
  \begin{enumerate}
  \item For each $\xi \in S_n$ and $i < n$, the $i$th component $\xi_i$ is Lipschitz with constant at most $n$.
  \item The space $S_n$ is compact.
  \item The map $S_n \rightarrow \fM^n \sslash G$ sending $\xi \rightarrow [\xi]$ is a homeomorphic bijection, with inverse $[\xi] \mapsto \tilde{\xi} = \xi \circ \mu^*(\xi)$.
    Moreover, we have $\tilde{\xi} \circ \mu(\xi) = \xi$.
  \item The space $S_2$ is homeomorphic to the Roelcke completion of $G$.
    Consequently, $G$ is Roelcke-precompact.
  \end{enumerate}
\end{thm}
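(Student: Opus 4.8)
The plan is to establish the four items in the order stated, as each one feeds into the next.

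Item (1) is a one-line computation. If $\xi\in S_n$ and $0\le s\le t\le 1$, then $\sum_{i<n}\bigl(\xi_i(t)-\xi_i(s)\bigr)=n(t-s)$ because $\mu(\xi)=\id$, while each summand is non-negative since the $\xi_i$ are weakly increasing; hence $0\le\xi_i(t)-\xi_i(s)\le n(t-s)$, so $\xi_i$ is $n$-Lipschitz. For item (2), item (1) places each coordinate of a point of $S_n$ in the space of $n$-Lipschitz maps $[0,1]\to[0,1]$, which is compact for the uniform metric by the Arzel\`{a}--Ascoli theorem; thus $S_n$ is contained in the compact space of $n$-tuples of such maps. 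It then suffices to note that $S_n$ is closed there, since ``$\xi_i$ weakly increasing'', ``$\xi_i(0)=0$ and $\xi_i(1)=1$'', and ``$\mu(\xi)=\id$'' are all preserved under uniform limits (and surjectivity of $\xi_i$ follows from continuity, monotonicity, and the endpoint conditions).

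Item (3) is the heart of the matter, and the one geometric input is that $\xi$ is constant on each fibre of $\mu=\mu(\xi)$: if $\mu$ is constant on an interval then so is each $\xi_i$, because the $\xi_i$ are weakly increasing and $\sum_i\xi_i=n\mu$. From this I would deduce that $\tilde\xi:=\xi\circ\mu^*$ is well defined and continuous --- concretely $\tilde\xi(v)=\xi(t)$ for any $t$ with $\mu(t)=v$, which is independent of $t$ and continuous by a subsequence argument on the compact interval $[0,1]$ --- that it is weakly increasing and surjective, that $\mu(\tilde\xi)=\mu\circ\mu^*=\id$ so $\tilde\xi\in S_n$, and that $\tilde\xi\circ\mu=\xi$ because $\mu^*(\mu(t))$ lies in the fibre $\mu^{-1}(\mu(t))$ of $\mu$ containing $t$. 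With these facts, surjectivity of $\xi\mapsto[\xi]$ goes as follows: given $\eta\in\fM^n$, write $\eta=\tilde\eta\circ\mu(\eta)$ and approximate $\mu(\eta)\in\fM$ uniformly by $g_k\in G$, e.g.\ $g_k=(1-\tfrac1k)\mu(\eta)+\tfrac1k\id$; then $g_k^{-1}\cdot\tilde\eta=\tilde\eta\circ g_k\to\eta$, using that the coordinates of $\tilde\eta\in S_n$ are $n$-Lipschitz, so $\eta\in\overline{G\tilde\eta}=[\tilde\eta]$ and hence $[\eta]=[\tilde\eta]$ with $\tilde\eta\in S_n$. For injectivity, if $\xi,\xi'\in S_n$ and $g_k\cdot\xi=\xi\circ g_k^{-1}\to\xi'$, then applying the $1$-Lipschitz averaging map $\mu(\cdot)\colon\fM^n\to\fM$ gives $g_k^{-1}=\mu(\xi)\circ g_k^{-1}\to\mu(\xi')=\id$, whence $\xi\circ g_k^{-1}\to\xi$ (again by the Lipschitz bound of item (1)), so $\xi=\xi'$. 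Thus $\xi\mapsto[\xi]$ is a continuous bijection from the compact space $S_n$ onto the metric --- hence Hausdorff --- space $\fM^n\sslash G$, so it is a homeomorphism; the surjectivity argument exhibits $[\eta]\mapsto\tilde\eta$ as its inverse, and $\tilde\xi=\xi$ for $\xi\in S_n$ since then $\mu^*(\xi)=\id$.

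For item (4), the discussion preceding the theorem identifies $\fM$, with the action $g\cdot\xi=\xi\circ g^{-1}$, with the left completion $\widehat G_L$ carrying the left-translation action; hence $\fM^2\sslash G=\widehat G_L^2\sslash G=R(G)$ is the Roelcke completion, and item (3) with $n=2$ identifies it homeomorphically with $S_2$, which is compact by item (2). Therefore $R(G)$ is compact, i.e.\ $G$ is Roelcke-precompact. The main obstacle is the partial-inverse bookkeeping in item (3): checking that $\tilde\xi=\xi\circ\mu^*$ is genuinely continuous and surjective (not merely left-continuous) and that $\tilde\xi\circ\mu=\xi$, all of which rest on the single observation that $\xi$ is constant on the fibres of $\mu(\xi)$. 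The remaining ingredients --- the Lipschitz estimate, Arzel\`{a}--Ascoli, the projection-by-$\mu$ trick for injectivity, and the upgrade of a continuous bijection from a compact space to a homeomorphism --- are routine.
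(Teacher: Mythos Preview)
Your proof is correct and follows essentially the same route as the paper: the Lipschitz bound and Arzel\`a--Ascoli for (i)--(ii), the constancy of $\xi$ on fibres of $\mu(\xi)$ together with $\tilde\xi\circ\mu(\xi)=\xi$ for (iii), and the identification $\fM\cong\widehat G_L$ for (iv). The only notable variation is in the injectivity step of (iii): the paper observes that $\xi\mapsto\tilde\xi$ is $G$-invariant and continuous on $\fM^n$, hence factors through $\fM^n\sslash G$ as a two-sided inverse, whereas you argue directly that $g_k\cdot\xi\to\xi'$ with $\xi,\xi'\in S_n$ forces $g_k^{-1}\to\id$ by applying $\mu$ --- a clean alternative that sidesteps verifying continuity of the tilde map on all of $\fM^n$.
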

\begin{proof}
  Assume $\xi \in S_n$, and let $0 \leq s \leq t \leq 1$.
  Then
  \begin{gather*}
    \xi_i(s) \leq \xi_i(t) = nt - \sum_{j\neq i} \xi_j(t) \leq nt - \sum_{j\neq i} \xi_j(s) = n(t-s) + \xi_i(s).
  \end{gather*}
  By Arzelà–Ascoli, $S_n$ is compact.

  For the third item, let $\xi \in \fM^n$ be arbitrary.
  If $t,s \in [0,1]$ are such that $\mu(\xi)(t) = \mu(\xi)(s)$ then necessarily $\xi_i(t) = \xi_i(s)$ for all $i$, so $\tilde{\xi} = \xi \circ \mu^*(\xi)$ belongs to $\fM^n$.
  A direct calculation yields $\tilde{\xi} \circ \mu(\xi) = \xi$, so $[\xi] = [\tilde{\xi}]$.
  Similarly, and $\mu(\tilde{\xi}) = \mu(\xi) \circ \mu(\xi)^* = \id$, so $\tilde{\xi} \in S_n$.
  The map $\xi \mapsto \tilde{\xi}$ is $G$-invariant and continuous, so it factors via $\fM^n \sslash G$.
  Thus the map $S_n \rightarrow \fM^n \sslash G$ is a bijection.
  It is clearly continuous (even contractive), and since $S_n$ is compact, it is a homeomorphism.

  In particular, we have $S_2 \cong \fM^2 \sslash G = \widehat{G}_L^2 \sslash G$, and $\widehat{G}_L^2 \sslash G$ coincides the Roelcke completion for any topological group $G$.
  Thus the Roelcke completion is compact.
\end{proof}

Notice that \autoref{thm:H01TypeSpaces} gives an explicit construction of the Roelcke compactification $R(G)$ as $S_2$ together with the map $G \rightarrow S_2$, $g \rightarrow \widetilde{(\id,g)}$.
Replacing a pair $(\xi,\zeta) \in S_2$ with $\xi - \id = \id - \zeta$, we may further identify $R(G)$ with the space of functions $\xi \colon [0,1] \rightarrow \bR$ which are Lipschitz with constant $1$ and vanish at the endpoints.

From a model-theoretic standpoint, the identity $\tilde{\xi} \circ \mu(\xi) = \xi$ means that a tuple $\xi$ is determined by its type (namely, $\tilde{\xi}$) together with an object that is invariant under permutations of $\xi$ (namely, $\mu(\xi)$).
In other words, the only elementary permutation of a finite set is the identity.

\begin{rmk}
  In classical logic, the property that any elementary permutation of a finite set is the identity is usually a consequence of the existence of a definable linear ordering (but not always: the former does not imply the latter).
  In $\fM$ something similar happens.
  Indeed, for $\xi,\zeta \in \fM$ define $o(\xi,\zeta) = \sup \, (\xi-\zeta)$.
  This is a continuous $G$-invariant predicate, so by \autoref{fct:RyllNardzewski} it is definable.
  Semantically, it is a ``continuous order predicate'': it vanishes if and only if $\zeta \geq \xi$, and otherwise it measures the extent to which $\zeta \ngeq \xi$.
  Accordingly, it satisfies the continuous logic analogues of the axioms of an order relation (in the same way that the axioms of a pseudo-distance are the continuous logic analogues of the axioms of an equivalence relation):
  \begin{itemize}
  \item Reflexivity: $o(x,x) = 0$.
  \item Anti-symmetry: $o(x,y) \vee o(y,x) = 0$ implies $x = y$.
  \item Transitivity: $o(x,z) \leq o(x,y) + o(y,z)$.
  \end{itemize}
  It also satisfies some analogue of linearity:
  \begin{itemize}
  \item ``Linearity'': $o(x,y) + o(y,x) \leq 1$.
  \end{itemize}
  In particular, a classical $\{0=T,1=F\}$-valued relation satisfies the first three axioms if and only if it is an order relation, and all four if and only if it is linear.
\end{rmk}

\begin{qst}
  Assume $\fM$ is a general metric structure with a definable relation satisfying all four axioms, say on some complete type $p$, and say moreover that its supremum there is $1$.
  Is this enough to conclude that every elementary permutation of a finite set of realisations of this type is the identity?
\end{qst}

At any rate, this property of $\fM$ suggests that if $\fM$ is to have WEMI it should also have the stronger EMI, which is what we now aim to prove.

\begin{dfn}
  \label{dfn:RhoGap}
  Let $\rho$ be a definable pseudo-distance on $\fM$.
  Say that an open interval $\emptyset \neq (\alpha,\beta) \subseteq (0,1)$ is a \emph{$\rho$-gap} if there exist $\xi,\zeta \in \fM$ with $\xi \sim_\rho \zeta$ such that for all $s$ either $\xi(s) \leq \alpha$ or $\zeta(s) \geq \beta$ (or both).
  Let $U_\rho \subseteq (0,1)$ be the union of all $\rho$-gaps.
\end{dfn}

Equivalently, $\xi \sim_\rho \zeta$ witness that $(\alpha,\beta)$ is a $\rho$-gap if and only if there exists $s$ such that $\xi(s) \leq \alpha < \beta \leq \zeta(s)$ (take $s = \sup \bigl\{ t : \xi(t) \leq \alpha \bigr\}$).
Therefore, $\alpha \in U_\rho$ if and only if there exists a pair $\xi \sim_\rho \zeta$ such that $\xi(s) < \alpha < \zeta(s)$ for some $s$.

\begin{lem}
  \label{lem:RhoGap}
  Let $\rho$ be a definable pseudo-distance on $\fM$.
  \begin{enumerate}
  \item \label{item:RhoGapUnion}
    The union of any two intersecting $\rho$-gaps is a $\rho$-gap.
  \item \label{item:RhoGapLargest}
    The set $U_\rho$ is the disjoint union of the maximal $\rho$-gaps.
  \item \label{item:RhoGapExtremeOne}
    Let $I = (\alpha,\beta)$ be a maximal $\rho$-gap.
    Define:
    \begin{gather*}
      \xi_I(s) =
      \begin{cases}
        s & s \leq \alpha \text{ or } s \geq \beta, \\
        \alpha & \alpha \leq s \leq \half[\alpha+\beta], \\
        2s - \beta & \half[\alpha+\beta] \leq s \leq \beta.
      \end{cases}
      \qquad
      \zeta_I(s) =
      \begin{cases}
        s & s \leq \alpha \text{ or } s \geq \beta, \\
        2s - \alpha & \alpha \leq s \leq \half[\alpha+\beta], \\
        \beta & \half[\alpha+\beta] \leq s \leq \beta.
      \end{cases}
    \end{gather*}
    Then $\xi_I \sim_\rho \zeta_I$.
    In particular, $\xi_I,\zeta_I$ witness that $I$ is a $\rho$-gap.
  \item \label{item:RhoGapSeparated}
    The complement $K_\rho = (0,1) \setminus U_\rho$ has no isolated points.
  \item \label{item:RhoGapExtreme}
    Let $\xi_\rho$ ($\zeta_\rho$) be the identity on $K_\rho$ and agree with $\xi_I$ ($\zeta_I$) on each maximal $\rho$-gap $I$.
    Then $\xi_\rho \sim_\rho \zeta_\rho$.
    In particular, $\xi_\rho,\zeta_\rho$ witness all $\rho$-gaps.
  \item \label{item:RhoGapCharacterisation}
    We have $\xi \sim_\rho \zeta$ if and only if $\xi(s) \neq \zeta(s)$ implies $\half[\xi(s) + \zeta(s)] \in U_\rho$.
    When $\half[\xi + \zeta] = \id$, i.e., when $(\xi,\zeta) \in S_2$, we have $\xi \sim_\rho \zeta$ if and only if $\xi(s) = \zeta(s) = s$ on $K_\rho$.
  \item \label{item:RhoGapTrivial}
    If $K_\rho = \emptyset$ then $\rho = 0$ is the trivial pseudo-distance and $\fM_\rho$ is a singleton.
  \item \label{item:RhoGapNonTrivial}
    Otherwise, $K_\rho \neq \emptyset$ and there exists $\chi \in \fM$ which is constant on all $\rho$-gaps and strictly increasing on $K_\rho$.
    The pseudo-distance $\rho_\chi(\xi,\zeta) = d(\chi \circ \xi, \chi \circ \zeta)$ is then definable and uniformly equivalent to $\rho$.
    In this case $\fM_\rho$ is isomorphic to $\fM$.
  \end{enumerate}
\end{lem}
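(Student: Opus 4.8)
\textit{Setup.} Throughout I will use that $\rho$, being definable, is $G$-invariant and uniformly continuous (\autoref{fct:RyllNardzewski}), so $\sim_\rho$ is a $G$-invariant equivalence relation; moreover both $\{(\xi,\zeta):\rho(\xi,\zeta)=0\}$ and the relation $P_{\alpha,\beta}(\xi,\zeta)\equiv(\exists s\colon \xi(s)\le\alpha\text{ and }\zeta(s)\ge\beta)$ are closed and $G$-invariant in $\fM^2$, hence stable under the $\widehat{G}_L$-action (a $G$-invariant closed set contains the $G$-orbit closures of its points), and $\rho(\chi\xi,\chi\zeta)=\rho(\xi,\zeta)$ for every $\chi\in\widehat{G}_L$. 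I will also repeatedly pass to the \emph{compact} type spaces $S_2,S_3$ of \autoref{thm:H01TypeSpaces}, on which $\rho$ and the predicates $P_{\alpha,\beta}$ descend. Two trivial observations first: by the monotonicity of elements of $\fM$ and the reformulation following \autoref{dfn:RhoGap}, \emph{every subinterval of a $\rho$-gap is a $\rho$-gap}; and composing a witness with a homeomorphism (and sliding the witnessing point along flat parts) lets us arrange the ``jump'' of a gap-witness to occur over a nondegenerate interval at a prescribed value. Finally, $\fM$ itself is a minimal complete $G$-space (its $1$-type space is a point), so \autoref{lem:RoelckePrecompactAmalgamation} applies to it.

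\textit{Gap combinatorics.} The crux is \ref{item:RhoGapUnion}. Given intersecting gaps with $\alpha_1\le\alpha_2<\beta_1\le\beta_2$ (the case $\beta_2\le\beta_1$ being vacuous), witnessed by $\xi_i\sim_\rho\zeta_i$, apply \autoref{lem:RoelckePrecompactAmalgamation} to $\fM$ and the points $\zeta_1,\xi_2$ to obtain $\chi_1,\chi_2\in\widehat{G}_L$ and $z\in\fM$ with $z=\chi_1\zeta_1=\chi_2\xi_2$. By $\widehat{G}_L$-invariance the pairs $(\chi_1\xi_1,z)$ and $(z,\chi_2\zeta_2)$ still satisfy $P_{\alpha_1,\beta_1}$, $P_{\alpha_2,\beta_2}$ respectively, while $\chi_1\xi_1\sim_\rho z\sim_\rho\chi_2\zeta_2$. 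Choosing witnessing points $s,s'$ and using $z(s')\le\alpha_2<\beta_1\le z(s)$, so $s'<s$, gives $(\chi_1\xi_1)(s')\le\alpha_1<\beta_2\le(\chi_2\zeta_2)(s')$; hence $(\alpha_1,\beta_2)$ is a gap. Then \ref{item:RhoGapLargest} follows: the union of all gaps through a point $\gamma$ is a gap — finite sub-unions by \ref{item:RhoGapUnion}, and the nested union by extracting a convergent subsequence of normalised witnesses in the compact $S_2$ and passing the predicate $P$ to the limit — so every gap lies in a maximal one, and two intersecting maximal gaps would merge by \ref{item:RhoGapUnion}, a contradiction.

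\textit{Standard witnesses and the characterisation.} One checks directly that $(\xi_I,\zeta_I)\in S_2$ and satisfies $P_{\alpha,\beta}$; \ref{item:RhoGapExtremeOne} is then proved by normalising an arbitrary witness of $I$ onto this extremal pair (using \autoref{lem:RoelckePrecompactAmalgamation}, the monotonicity normalisation above, and a truncation argument to push the witness into the standard shape), and \ref{item:RhoGapExtreme} by amalgamating the countably many $(\xi_{I_n},\zeta_{I_n})$ into one pair — legitimate because the gaps $I_n$ are disjoint in $(0,1)$, so $\diam I_n\to0$ and $(\xi_\rho,\zeta_\rho)$ is a uniform limit of finite pastings — and invoking continuity of $\rho$. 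For \ref{item:RhoGapSeparated}: an isolated $\gamma\in K_\rho$ sits between maximal gaps $(\alpha,\gamma),(\gamma,\beta)$; normalising their standard witnesses so both jumps meet at the value $\gamma$ over nondegenerate intervals and splicing as in \ref{item:RhoGapUnion} yields a witness of $(\alpha,\beta)$, contradicting maximality. The heart of the lemma is \ref{item:RhoGapCharacterisation}: working in $S_2$, write $\xi=\id+h$, $\zeta=\id-h$ (so the midpoint of $\xi(s),\zeta(s)$ is $s$) and show $\bar\rho(h)=0\iff\supp h\subseteq U_\rho$. For $\Leftarrow$, compactness and \ref{item:RhoGapLargest} confine $\supp h$ to finitely many maximal gaps, decompose $h=\sum_j h_j$ with $\supp h_j\subseteq\overline{I_j}$, and flip the $h_j$ one at a time; each flip is a modification supported in a single maximal gap, which (after renormalising in $S_2$) reduces to showing $\bar\rho(g)=0$ for every Lipschitz-$1$ $g$ supported in one gap $\overline I$ — handled by subdividing $I$ (all its subintervals being gaps), chaining the standard witnesses of \ref{item:RhoGapExtremeOne}, and approximating a general $g$ uniformly by piecewise-linear ones. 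For $\Rightarrow$, if $h(s_0)\ne0$ with $s_0\notin U_\rho$, a suitable truncation of the witness exhibits $s_0$ inside a gap. \textbf{I expect this chaining argument — in particular the bookkeeping needed to see that \emph{all} modifications inside a gap, not just the tent $h_I$, are $\rho$-trivial — to be the main obstacle.}

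\textit{Conclusions.} For \ref{item:RhoGapTrivial}, $K_\rho=\emptyset$ gives $U_\rho=(0,1)$; any $h$ is a uniform limit of truncations $h_n$ with $\supp h_n$ compactly in $(0,1)=U_\rho$, so $\bar\rho(h_n)=0$ by \ref{item:RhoGapCharacterisation} and $\bar\rho(h)=0$ by continuity, whence $\rho\equiv0$ and $\fM_\rho$ is a singleton. For \ref{item:RhoGapNonTrivial}, $K_\rho$ is by \ref{item:RhoGapSeparated} a nonempty closed subset of $(0,1)$ without isolated points, hence carries a nonatomic Borel probability measure $\nu$ of full support; put $\chi(s)=\nu\bigl([0,s]\bigr)\in\fM$, which is constant on each $\rho$-gap and strictly increasing on $K_\rho$. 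Then $\rho_\chi(\xi,\zeta)=d(\chi\circ\xi,\chi\circ\zeta)$ is $G$-invariant (as $d$ is and $\chi\circ(g\cdot\xi)=g\cdot(\chi\circ\xi)$) and continuous, hence definable; by \ref{item:RhoGapCharacterisation} it has the same kernel as $\rho$, and since both descend to continuous functions on the compact $S_2$ this upgrades (via convergent subsequences) to uniform equivalence. Finally $\xi\mapsto\chi\circ\xi$ induces an isometric embedding of $(\fM/{\sim_{\rho_\chi}},\rho_\chi)$ into $(\fM,d)$ whose image is dense, so $\fM_\rho\cong\fM_{\rho_\chi}\cong\fM$.
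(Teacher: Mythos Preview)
Your overall strategy is close to the paper's, but there is a genuine gap in your treatment of \ref{item:RhoGapCharacterisation}$\Leftarrow$, and it is precisely the place you flag as the ``main obstacle''. The claim that ``compactness and \ref{item:RhoGapLargest} confine $\supp h$ to finitely many maximal gaps'' is false: the hypothesis only gives $\{h\neq 0\}\subseteq U_\rho$, not $\supp h\subseteq U_\rho$. If $K_\rho$ is, say, a Cantor set, $h$ can be nonzero on \emph{every} gap, with $\supp h=[0,1]$. This is repairable by an approximation (truncate $h$ outside finitely many large gaps and use continuity of $\rho$), but even then you are left with the task you correctly identify as hard: showing that \emph{every} Lipschitz-$1$ bump supported in a single maximal gap is $\rho$-trivial, not just the tent $h_I$. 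Your proposed subdivision-and-chaining argument appeals to \ref{item:RhoGapExtremeOne} on subintervals of $I$, but \ref{item:RhoGapExtremeOne} is stated (and, in the paper, proved) only for \emph{maximal} gaps; the proof uses maximality in an essential way (to force the second coordinate of the witness to be constant $=\beta$ on the reparametrised interval, via ``$\zeta(\beta)\leq\beta$, else $\beta\in U_\rho$''). So the chain does not obviously assemble.

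The paper sidesteps this entire obstacle by using \ref{item:RhoGapExtreme} directly. If $(\xi,\zeta)\in S_2$ agree with $\id$ on $K_\rho$, one checks that $[\xi\circ\xi_\rho,\zeta_\rho]=[\zeta\circ\xi_\rho,\zeta_\rho]=[\xi_\rho,\zeta_\rho]$ as types (the point: $\zeta_\rho$ is constant on the second half of each gap, so composing the first coordinate by anything that fixes $K_\rho$ and sends each gap into itself can be undone by an element of $\widehat G_L$). Since $\xi_\rho\sim_\rho\zeta_\rho$ and $\rho$ depends only on the type, this gives $\xi\circ\xi_\rho\sim_\rho\zeta_\rho\sim_\rho\zeta\circ\xi_\rho$, and then $\xi\sim_\rho\zeta$ by $\widehat G_L$-invariance of $\rho$. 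No per-gap bookkeeping, no chaining: item \ref{item:RhoGapExtreme} does all the work in one stroke. Your sketch of \ref{item:RhoGapExtremeOne} should also make the role of maximality explicit; ``truncation'' alone will not pin the witness to the standard shape.
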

\begin{proof}
  For \autoref{item:RhoGapUnion} assume that $(\alpha,\beta)$ and $(\gamma,\delta)$ are intersecting $\rho$-gaps.
  If one contains the other then the union is already a $\rho$-gap, so we may assume that $\alpha < \gamma < \beta < \delta$.
  Let $\xi,\zeta$ (respectively, $\zeta',\chi$) witness that $(\alpha,\beta)$ (respectively $(\gamma,\delta)$) are $\rho$-gaps.
  Since this is merely a property of the types $[\xi,\zeta]$ and $[\zeta',\chi]$, we may assume that $\zeta = \zeta'$ (by \autoref{lem:RoelckePrecompactAmalgamation}).
  Now, $\xi \sim_\rho \chi$ and for all $s$, if $\xi(s) > \alpha$ then $\zeta(s) \geq \beta > \gamma$ ad therefore $\chi(s) \geq \delta$, witnessing that $(\alpha,\delta)$ is a $\rho$-gap.

  For \autoref{item:RhoGapLargest} let $\gamma \in U_\rho$ and let $(\alpha,\beta)$ be the union of all $\rho$-gaps containing it.
  By \autoref{item:RhoGapUnion} and a compactness argument, if $\alpha_n \searrow \alpha$ and $\beta_n \nearrow \beta$ (and $\alpha_0 < \beta_0$) then $(\alpha_n,\beta_n)$ is a $\rho$-gap, say witnessed by $[\xi_n,\zeta_n]$.
  Possibly passing to a sub-sequence we may assume that $[\xi_n,\zeta_n] \rightarrow [\xi,\zeta]$, and then we may further assume that in fact $\xi_n \rightarrow \xi$ and $\zeta_n \rightarrow \zeta$.
  Assume now that $\xi(s) > \alpha$.
  Then for all $n$ large enough we have $\xi_n(s) > \alpha_n$, so $\zeta_n(s) \geq \beta_n$ and therefore $\zeta(s) \geq \beta$ as well, witnessing that $(\alpha,\beta)$ is a $\rho$-gap.
  It is by construction maximal.
  By \autoref{item:RhoGapUnion}, any two distinct maximal $\rho$-gaps are disjoint.

  For \autoref{item:RhoGapExtremeOne} let $\xi,\zeta$ witness that $I$ is a $\rho$-gap.
  Up to a reparameterisation we may assume that
  \begin{gather*}
    \alpha = \inf \, \{t : \xi(t) > \alpha \} < \sup \, \{t : \xi(t) < \beta \} = \beta.
  \end{gather*}
  Then $\zeta(\alpha) \geq \beta$ (since $(\xi,\zeta)$ witness that $(\alpha,\beta)$ is a $\rho$-gap) and $\zeta(\beta) \leq \beta$ (otherwise $(\xi,\zeta)$ would witness that $\beta \in U_\rho$), so $\zeta(s) = \beta$ for all $s \in [\alpha,\beta]$.
  Therefore $\zeta = \zeta \circ \xi_I = \zeta \circ \zeta_I$.
  If we let $\xi' = \xi \circ \xi_I$, $\xi'' = \xi \circ \zeta_I$, we have $[\xi',\zeta] = [\xi'',\zeta] = [\xi,\zeta]$, so $\xi' \sim_\rho \zeta \sim_\rho \xi''$.
  We also have $\xi'(s) = \xi''(s)$ for all $s \notin (\alpha,\beta)$, and for $s = \half[\alpha+\beta]$ we have $\xi'(s) = \xi(\alpha) = \alpha$ and $\xi''(s) = \xi(\beta) = \beta$.
  Therefore $[\xi',\xi''] = [\xi_I,\zeta_I]$ and thus $\xi_I \sim_\rho \zeta_I$.

  For \autoref{item:RhoGapSeparated}, assume that $\beta$ is an isolated point of $K_\rho$, i.e, that there exist maximal $\rho$-gaps $I = (\alpha,\beta)$ and $J = (\beta,\gamma)$.
  Notice that $\zeta_I$ is strictly increasing except on $[\half[\alpha+\beta],\beta]$ where it is equal to $\beta$, and similarly $\xi_J$ with interval $[\beta,\half[\beta+\gamma]]$.
  Therefore there exists $g \in G$ such that $\zeta_I = \xi_J \circ g$, so $\xi_I \sim_\rho \zeta_I \sim_\rho \zeta_J \circ g$.
  Let $\half[\alpha+\beta] < s < \beta$, so $\beta < g(s) < \half[\beta+\gamma]$ and $\xi_I(s) < \beta < \zeta_J \circ g(s)$, witnessing that $\beta \in U_\rho$ after all.

  For \autoref{item:RhoGapExtreme} enumerate the maximal $\rho$-gaps as $\{I_n\}$ (there are at most countably many -- if there are only finitely many add a tail of empty sets to the sequence).
  For each $n$ let $\xi_n$ agree with $\xi_\rho$ on $\bigcup_{m \geq n} I_m$, with $\zeta_\rho$ on $\bigcup_{m < n} I_m$, and with both (i.e., with the identity) elsewhere.
  Then $[\xi_n,\xi_{n+1}] = [\xi_{I_n},\zeta_{I_n}]$, so by induction $\xi_\rho \sim_\rho \xi_n$.
  Since $\xi_n \rightarrow \zeta_\rho$ we conclude that $\xi_\rho \sim_\rho \zeta_\rho$.

  For \autoref{item:RhoGapCharacterisation}, observe first that if $\xi \sim_\rho \zeta$ and $\xi(s) \neq \zeta(s)$, say $\xi(s) < \zeta(s)$, then $\bigl( \xi(s),\zeta(s) \bigr)$ is a $\rho$-gap containing $\half[\xi(s) + \zeta(s)]$.
  Conversely, assume the condition holds, and without loss of generality we may assume that $(\xi,\zeta) \in S_2$, i.e., that $\half[\xi+\zeta] = \id$.
  Our hypothesis implies that $\xi$ and $\zeta$ agree with the identity outside $U_\rho$.
  It follows that $[\xi \circ \xi_\rho,\zeta_\rho] = [\zeta \circ \xi_\rho,\zeta_\rho] = [\xi_\rho,\zeta_\rho]$, so $\xi \circ \xi_\rho \sim_\rho \zeta_\rho \sim_\rho \zeta \circ \xi_\rho$ and therefore $\xi \sim_\rho \zeta$.

  Item \autoref{item:RhoGapTrivial} is clear.
  For \autoref{item:RhoGapNonTrivial}, the construction is fairly standard.
  One fixes an enumeration $\bigl\{ I_n = (\alpha_n,\beta_n) \bigr\}$ of maximal $\rho$-gaps and a sequence $r_n \in [0,1]$ such that $r_n < r_m \Longleftrightarrow \beta_n < \alpha_m$ and $r_n = 0$ if $\alpha_n = 0$, $r_n = 1$ if $\beta_n = 1$.
  Then one defines $\chi_m$ to be equal to $r_n$ on $I_n$ for $n < m$ and to increase with a constant slope between adjacent gaps among $\{I_n\}_{n<m}$.
  The sequence $\chi_m$ converges uniformly to $\chi$ which is as desired.
  It follows from \autoref{fct:RyllNardzewski} that $\rho_\chi$, being $G$-invariant and continuous, is definable.
  By \autoref{item:RhoGapCharacterisation}, if $(\xi,\zeta) \in S_2$ then $\xi \sim_{\rho_\chi} \zeta \Longleftrightarrow \xi \sim_\rho \zeta$, so the same holds to arbitrary $\xi,\zeta \in \fM$.
  By a compactness argument, the two pseudo-metrics must be uniformly equivalent.
  In other words, sending the class of $\xi$ modulo $\rho$ to $\chi \circ \xi$ gives rise to an isomorphism of complete $G$-spaces $\fM_\rho \cong \fM$.
\end{proof}

Let us observe that the reasoning of \autoref{thm:H01TypeSpaces} also applies to infinite tuples in the following sense.
For $\xi \in \fM^\bN$ let $\eta(\xi) = \sum_i 2^{-i-1} \xi_i \in \fM$ and $\hat{\xi}_i = \xi_i \circ \eta^*(\xi)$.
Then $\hat{\xi}_i \in \fM$ has Lipschitz constant at most $2^{i+1}$, so $\hat{\xi}$ belongs to some fixed compact set (the space of $\bN$-types), and $\eta(\hat{\xi}) = \id$.
Moreover, we may recover the original tuple as $\xi = \hat{\xi} \circ \eta(\xi)$ (the same works with any family of strictly positive coefficients adding up to one).

\begin{thm}
  \label{thm:H01NoTrans}
  The structure $\fM$ has EMI -- in fact, any countable set of imaginary elements is interdefinable with either $\emptyset$ or some (singleton) $\xi \in \fM$.
  It therefore satisfies the hypotheses of \autoref{thm:NoTrans}, so every transitive complete $\Homeo^+[0,1]$-space is a singleton.
\end{thm}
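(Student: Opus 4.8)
The plan is to deduce the statement from \autoref{thm:NoTrans}. By the remarks following that theorem, the ``tautological'' structure $\fM = \fM_G$ attached to the non-compact Roelcke-precompact group $G = \Homeo^+[0,1]$ already satisfies every hypothesis of \autoref{thm:NoTrans} except possibly WEMI: indeed $\fM = \widehat{G}_L$ with $\fM \sslash G$ a singleton (so $[a] = \fM$ for every $a \in \fM$, which is never a singleton), while $G \curvearrowright \widehat{G}_L$ is not transitive because $G \subsetneq \widehat{G}_L$ (e.g.\ $\fM = \widehat{G}_L$ contains non-injective surjections). So it suffices to prove that $\fM$ has WEMI; in fact I would establish the stronger assertion, that every countable $A \subseteq \fM^\meq$ is interdefinable with $\emptyset$ or with a single $\xi \in \fM$.

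So let $A \subseteq \fM^\meq$ be countable. By \autoref{lem:Countable} we may replace $A$ by a single interdefinable imaginary $a$, lying in a sort $\fM_\rho$ for some definable — hence $\Aut(\fM)$-invariant — pseudo-distance $\rho$ on $\fM^\bN$. By \autoref{fct:Aleph0CategoricalImaginary}, $X := [a]$ is a minimal complete $G$-space, and it is enough to show that $X$ is either a singleton — whence $a \in \dcl^\meq(\emptyset)$ and $A$ is interdefinable with $\emptyset$ — or isometrically $G$-isomorphic to $\fM$. In the latter case $\fM$ is a single orbit closure, so the isomorphism identifies all of $\fM$ with $X = [a]$ and sends $a$ to some $\xi \in \fM$; being a morphism in both directions, it witnesses (first part of \autoref{fct:Aleph0CategoricalImaginary}) that $a$, hence $A$, is interdefinable with $\xi$.

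To prove this dichotomy I would reduce from pseudo-distances on $\fM^\bN$ to pseudo-distances on $\fM$, using the coding of infinite tuples recorded after \autoref{lem:RhoGap}: every $\xi \in \fM^\bN$ factors as $\xi = \hat\xi \circ \eta(\xi)$, where $\eta(\xi) = \sum_i 2^{-i-1}\xi_i \in \fM$ and $\hat\xi$ ranges over the compact ``type space'' $\fT = \{\tau \in \fM^\bN : \eta(\tau) = \id\}$, on which $G$ acts trivially. For $\tau \in \fT$ put $\rho_\tau(\chi,\chi') = \rho(\tau \circ \chi, \tau \circ \chi')$; since $g\cdot(\tau\circ\chi) = \tau\circ(g\cdot\chi)$, this is a $G$-invariant continuous pseudo-distance on $\fM$, hence definable by \autoref{fct:RyllNardzewski}, and $\chi \mapsto [\tau\circ\chi]_\rho$ induces a $G$-equivariant isometric embedding of $\fM_{\rho_\tau}$ onto the closed $G$-invariant set $Y_\tau = \overline{\{[\tau\circ\chi]_\rho : \chi \in \fM\}} \subseteq \fM_\rho$. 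Writing $a = \lim_n [\xi^{(n)}]_\rho$, factoring $\xi^{(n)} = \hat\xi^{(n)}\circ\eta(\xi^{(n)})$, and passing to a subsequence with $\hat\xi^{(n)} \to \tau$ in the compact $\fT$, the componentwise bound $\|\hat\xi^{(n)}_i\circ\eta(\xi^{(n)}) - \tau_i\circ\eta(\xi^{(n)})\|_\infty \le \|\hat\xi^{(n)}_i - \tau_i\|_\infty$ together with uniform continuity of $\rho$ yields $d\bigl([\xi^{(n)}]_\rho, [\tau\circ\eta(\xi^{(n)})]_\rho\bigr) \to 0$, so $a = \lim_n [\tau\circ\eta(\xi^{(n)})]_\rho \in Y_\tau$; being closed and $G$-invariant, $Y_\tau$ then contains $X = [a]$, and thus $X$ embeds $G$-equivariantly and isometrically into $\fM_{\rho_\tau}$.

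Now \autoref{lem:RhoGap}\autoref{item:RhoGapTrivial} and \autoref{item:RhoGapNonTrivial} finish the job: either $K_{\rho_\tau} = \emptyset$, so $\fM_{\rho_\tau}$ — hence $X$ — is a singleton; or $K_{\rho_\tau} \neq \emptyset$, so $\fM_{\rho_\tau}$ is isometrically $G$-isomorphic to $\fM$, and then $Y_\tau$, being $G$-isomorphic to the single orbit closure $\fM$, is itself one orbit closure, forcing $X = [a] = Y_\tau \cong \fM$. This gives the dichotomy, hence the strong form of EMI, and \autoref{thm:NoTrans} then yields the conclusion for $\Homeo^+[0,1]$. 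I expect the only genuine obstacle to be the reduction in the third paragraph — the passage from arbitrary imaginary sorts $\fM_\rho$ over $\fM^\bN$ to the single-variable pseudo-distances treated in \autoref{lem:RhoGap}, i.e.\ checking that the orbit closure of any imaginary already sits inside a single ``fibre'' $Y_\tau$; once that is secured, the structure theory of \autoref{lem:RhoGap} does the rest essentially for free.
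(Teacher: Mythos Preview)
Your argument is correct and follows the same strategy as the paper: reduce an arbitrary imaginary sort $\fM_\rho$ (with $\rho$ on $\fM^\bN$) to a single-variable pseudo-distance via the factorisation $\xi = \hat\xi \circ \eta(\xi)$, then invoke the dichotomy of \autoref{lem:RhoGap}. The paper avoids your compactness/limiting argument in the third paragraph by a simple observation you overlooked: since the $\sim_\rho$-classes are dense in $\fM_\rho$ and $[a]$ is an orbit \emph{closure}, one may first replace $a$ by some $a' \in [a]$ which is already the class of a concrete $\zeta \in \fM^\bN$, and then take $\tau = \hat\zeta$ directly --- no subsequence needed. (One minor inaccuracy: \autoref{lem:RhoGap}\autoref{item:RhoGapNonTrivial} gives $\fM_{\rho_\tau} \cong \fM$ only up to uniform equivalence, not isometry; this is harmless since continuity in both directions is all that interdefinability requires.)
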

\begin{proof}
  Let $G = \Homeo^+[0,1]$ and let $X$ be a minimal complete $G$-space.
  We need to show that if $X$ is not a singleton, then there exists $\xi \in \fM$ such that $[\xi]$ is isomorphic, as a $G$-space, to $X$.

  We know that $X$ is isomorphic to $[a]$ for some imaginary $a \in \fM_\rho$, where $\rho$ is a definable pseudo-distance on $\fM^\bN$.
  While $a$ itself need not necessarily be a $\sim_\rho$-class (after all, $\fM_\rho$ is a completion), there does exist $a' \in [a]$ which is.
  Therefore, possibly replacing $a$ with $a'$, we may assume that $a$ is the $\sim_\rho$-class of some infinite tuple $\zeta \in \fM^\bN$.
  Applying the discussion above, we have $\zeta = \hat{\zeta} \circ \eta(\zeta) = \bigl( \hat{\zeta}_i \circ \eta(\zeta) \bigr)_{i\in\bN}$.
  Define a pseudo-distance $\rho'$ on $\fM$ by:
  \begin{gather*}
    \rho'(\xi,\xi') = \rho(\hat{\zeta} \circ \xi, \hat{\zeta} \circ \xi').
  \end{gather*}
  By \autoref{fct:RyllNardzewski} this is definable.
  The map sending the $\sim_{\rho'}$-class of $\xi$ in $\fM_{\rho'}$ to the $\sim_\rho$-class of $\hat{\zeta} \circ \xi$ in $\fM_\rho$ is isometric and $G$-invariant, extending to an isometric map of $G$-spaces $\fM_{\rho'} \rightarrow \fM_\rho$, which sends, in particular, the class of $\eta(\zeta)$ to $a$.
  Since $\fM$ is a minimal $G$-space, so is $\fM_{\rho'}$, and the entire image must lie in a single orbit closure, namely in $[a]$.
  We thus obtain an isomorphism of the complete $G$-space $\fM_{\rho'}$ with $[a]$ and therefore with $X$.
  Since $\rho'$ is not trivial ($X$ is not a singleton), $\fM_{\rho'}$ is further isomorphic, by \autoref{lem:RhoGap}, to $\fM$.
  By \autoref{lem:Aleph0CategoricalEMI}, $\fM$ has EMI.
\end{proof}

\section{A question regarding the connection with discrete uniform distance}
\label{sec:DiscreteUniformDistance}

Any topological group $G$ carries a unique compatible left-invariant uniformity, which gives rise to a coarsest bi-invariant uniformity refining the topology (although not compatible, in general).
When $G$ is metrisable (e.g., Polish), it carries a compatible left-invariant metric $d_L$, and the bi-invariant uniformity is then given by $d_u(g,h) = \sup_{f \in G} \, d_L(gf,hf)$.
This is the distance of uniform convergence of the left action $G \curvearrowright (G,d_L)$ (or $G \curvearrowright \widehat{G}_L$).
If $G = \Aut(\fM)$ for an $\aleph_0$-categorical structure $\fM$, the bi-invariant uniformity further agrees with the uniform convergence of the action on $\fM$ (or, if $\fM$ is many-sorted, the product uniformity of the actions on the separate sorts), whence the terminology ``uniform distance'' in \cite{BenYaacov-Berenstein-Melleray:TopometricGroups} (notice that Pestov \cite{Pestov:Dynamics} mentions a coarser distance on $\Aut^*(\mu)$ which he calls ``uniform'' and which is not bi-invariant).

When $G$ is the automorphism group of a classical $\aleph_0$-categorical structure (in one, or finitely many, sorts) this uniform distance $d_u$ is always discrete, but for a metric structure we should expect it to be non-discrete.
This plays a crucial role in \cite{BenYaacov-Berenstein-Melleray:TopometricGroups}, where we show that certain Polish groups (the unitary group $U(\ell_2)$, the isometry group of the Urysohn sphere $\Iso(\bU_1)$, the group $\Aut(\mu)$ of measure-preserving transformations of $[0,1]$) have ample generics up to the uniform distance, even though they do not have precise ample generics (i.e., up to the discrete distance).
The uniform distance is relatively easy to compute explicitly, and of the Roelcke-precompact Polish groups considered so far, there are exactly three which, despite our expectations, have discrete uniform distance: $\Aut^*(\mu)$, $\Homeo^+[0,1]$ and $\Aut(\bG)$.
For the first two, this is fairly immediate:

\begin{prp}
  The uniform distance on the groups $\Aut^*(\mu)$ and $\Homeo^+[0,1]$ is discrete.
\end{prp}
\begin{proof}
  If $g \in \Aut^*(\mu)$ is not the identity there exists a measurable set $A$ such that $\mu(A) > 0$ and $gA \cap A = \emptyset$.
  Let $\xi = \bone_A/\mu(A)$ be the normalised characteristic function.
  Then $\xi$ belongs to the unit ball of $L_1(\mu)$ and $d(\xi,g\xi) > 1$.

  If $g \in \Homeo^+[0,1]$ is not the identity, we may assume that $g(t) > t$ for some $t$ (otherwise replace $g$ with $g^{-1}$).
  Let $\xi \colon [0,1] \rightarrow [0,1]$ be continuous, increasing, equal to $0$ on $[0,t]$ and to $1$ on $[g(t),1]$.
  Then $d(g\xi,\xi) = d(\xi \circ g^{-1},\xi) = 1$.

  In either case we have $G = \Aut(\fM)$, where $\fM$ is $\aleph_0$-categorical, so $d_u$ is uniformly equivalent with uniform convergence on $\fM$, which by the above is discrete.
\end{proof}

The case of $\Aut(\bG)$ requires a more substantial argument.
In fact, we give two arguments.
The first argument was communicated to the author by Melleray and Tsankov, whom we thank for the permission to include it here.

\begin{prp}
  \label{prp:AutGurarijDiscreteUniformDistance}
  The uniform distance on $\Aut(\bG)$ is discrete.
\end{prp}

\begin{fct}[{\cite{Lazar-Lindenstrauss:DualL1}}]
  For a Banach space $X$, the following are equivalent:
  \begin{enumerate}
  \item The dual $X^*$ is isometric to $L_1(\mu)$ for some measure $\mu$.
  \item The space $X$ is an $a^\infty$ space: that is to say that for any finite set $A \subseteq X$ and $\varepsilon > 0$ there exists $n$ and an operator $T\colon \ell_\infty(n) \rightarrow X$ such that
    \begin{itemize}
    \item For all $y \in \ell_\infty(n)$ we have $(1+\varepsilon)^{-1} \|y\| \leq \| T y \| \leq (1+\varepsilon) \| y \|$.
    \item For all $x \in A$ we have $d\bigl( x, T\ell_\infty(n) \bigr) < \varepsilon$.
    \end{itemize}
  \end{enumerate}
\end{fct}

\begin{proof}[First proof of \autoref{prp:AutGurarijDiscreteUniformDistance}]
  Let $g \in \Aut(\bG)$, and we need to show that if $g \neq \id$ then $\|g-\id\|$ is bounded away from $0$.
  The same $g$ also acts isometrically on $\bG^*$ as $g^* \lambda = \lambda \circ g$, and one easily checks that
  \begin{align*}
    \|g-\id_\bG \|
    &
      = \sup \, \bigl\{ \|\lambda g x - \lambda x\| : x \in \bG, \, \lambda \in \bG^*, \, \|x\| = \|\lambda\| = 1 \bigr\}
    \\ &
         = \|g^* - \id_{\bG^*}\|.
  \end{align*}
  It is fairly immediate to check that the Gurarij space is $a^\infty$, so $\bG^*$ is isometric to some $L_1(\mu)$.

  Since $L_1(\mu)$ is a dual, by Krein-Milman its unit ball is the weak$^*$-closed convex hull of its extreme points.
  An extreme point of the unit ball of $L_1(\mu)$ is necessarily of the form $\pm \bone_A / \mu(A)$ for some atom $A$ of $\mu$.
  The distance between two distinct extreme points is $2$, so $\|g - \id_\bG\| = \|g^* - \id_{\bG^*}\| = 2$.
\end{proof}

This proof leaves the author somewhat unsatisfied, since it uses quite a few ``black boxes'' and does not actually produce $x \in \bG$ such that $\|gx - x\| \approx 2$.
We therefore propose a more explicit, even if somewhat longer, argument:

\begin{lem}
  \label{lem:AutomorphismMovesNormingFunctional}
  Let $E$ be a separable Banach space, $g \in \Aut(E)$ an isometric automorphism which is not the identity.
  Then there exists $x \in E$ and $\lambda \in E^*$ such that $\lambda$ norms $x$ but not $gx$.
\end{lem}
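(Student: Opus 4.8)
The plan is to argue by contradiction: suppose that for every $x \in E$ and every $\lambda \in E^*$, if $\lambda$ norms $x$ then $\lambda$ also norms $gx$. The goal is to deduce $g = \id$. The key observation is that the condition in the statement is symmetric enough to be exploited from both sides. Indeed, $g$ is an isometric bijection, so $g^{-1}$ is also an isometric automorphism; and if $\lambda$ norms $x$, then $\lambda$ norms $gx$, and continuing, $\lambda$ norms $g^n x$ for all $n \in \bZ$ (applying the hypothesis to $g$ repeatedly, and to $g^{-1}$ using that $\lambda$ norming $x = g^{-1}(gx)$ means, by the hypothesis applied with $g^{-1}$... one should check the hypothesis is equivalent for $g$ and $g^{-1}$: if $\lambda$ norms $x$ then it norms $g^{-1}x$ as well, since $\lambda$ norms $g^{-1}x$ would, under $g$, norm $x$ — so actually one should phrase the hypothesis as the set of $(x,\lambda)$ with $\lambda$ norming $x$ being $g$-invariant, which is automatically $g^{-1}$-invariant).

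First I would fix $x \in E$ with $\|x\| = 1$ and, by Hahn--Banach, choose $\lambda \in E^*$ with $\|\lambda\| = 1$ and $\lambda(x) = 1$. By the standing assumption (and its consequence for all powers of $g$), $\lambda(g^n x) = 1$ for all $n \in \bZ$. In particular $\lambda\bigl(\tfrac{1}{2}(x + gx)\bigr) = 1$ while $\|\tfrac{1}{2}(x+gx)\| \leq 1$, so $\tfrac12(x+gx)$ is also normed by $\lambda$ and has norm exactly $1$. Iterating this and using convexity, every point of the form $\sum_{n} c_n g^n x$ with $c_n \geq 0$, $\sum c_n = 1$ finitely supported, is normed by $\lambda$ and hence has norm $1$. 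Now I would aim to show this forces $gx = x$. The idea: if $gx \neq x$, consider the closed convex hull $C$ of $\{g^n x : n \in \bZ\}$; it lies in the unit sphere's "flat face" determined by $\lambda$, i.e., $\lambda \equiv 1$ on $C$. The map $g$ acts on $C$ as an isometry (it permutes the generating set). One then wants a fixed-point or averaging argument — but $g$ need not have a fixed point in $C$ in general, so instead I would look for a second functional. Apply Hahn--Banach to separate, or rather, pick $y \in C$, $y \neq gy$ (if no such $y$ exists we are done since then $g$ fixes $x \in C$), and choose $\mu \in E^*$ with $\|\mu\| = 1$ norming $y - gy$, i.e., $\mu(y) - \mu(gy) = \|y - gy\| > 0$, so $\mu(y) \neq \mu(gy)$.

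The main obstacle — and the crux of the argument — is to now derive a contradiction from having both $\lambda$ (constant on the $g$-orbit of $y$) and $\mu$ (not constant on that orbit). The plan is to perturb $\lambda$ towards $\mu$: for small $t > 0$ consider whether $y$ is still normed by some functional close to $\lambda$ that "sees" the direction $y - gy$, exploiting that $y$ lies in the \emph{interior} of the face (being a genuine convex combination $\sum c_n g^n x$ with all $c_n > 0$ on a long block of the orbit) to move $\lambda$ slightly and still norm $y$ — then the hypothesis applied to that perturbed functional forces it to also norm $gy$, and comparing values gives the contradiction. Concretely, I expect one shows: since $y = \sum_{n} c_n g^n x$ with $c_n > 0$ for $n$ in a finite window $W$, and since $\lambda + t\mu$ (suitably renormalised) still attains its norm on a point of the orbit-hull for small $t$, one can locate a point $y_t$ normed simultaneously by $\lambda$ and by a functional genuinely distinguishing consecutive orbit points — contradicting that $\lambda$ is constant on the orbit. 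The delicate point is controlling norm-attainment under the perturbation in a general separable Banach space (no reflexivity), which is precisely why this lemma is stated for later use rather than proved by a soft argument here; I would carry out the perturbation carefully using that $y - gy \neq 0$ together with the linearity forced on $\lambda$ along the whole bi-infinite orbit.
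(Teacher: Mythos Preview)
Your proposal has two genuine gaps.

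First, the symmetry claim is wrong. The standing assumption is that the set $N = \{(x,\lambda) : \lambda \text{ norms } x\}$ is \emph{forward}-invariant under $(x,\lambda) \mapsto (gx,\lambda)$; this is not the same as $g$-invariance, and it does \emph{not} imply forward-invariance under $(x,\lambda) \mapsto (g^{-1}x,\lambda)$. In terms of the set-valued duality map $F(x) = \{\lambda : \lambda \text{ norms } x\}$, the hypothesis is $F(x) \subseteq F(gx)$ for all $x$, whereas the hypothesis for $g^{-1}$ would be the reverse inclusion. So you only get $\lambda(g^n x) = 1$ for $n \geq 0$, not for all $n \in \bZ$. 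This is not fatal, but it undercuts your later use of a ``bi-infinite orbit''.

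Second, and more seriously, the perturbation step is not an argument but a hope. You want to perturb $\lambda$ towards a functional $\mu$ that separates $y$ from $gy$, while keeping norm-attainment at $y$, and then invoke the hypothesis again. But nothing guarantees that the perturbed functional attains its norm at $y$, or anywhere on the orbit-hull; norm-attainment is fragile in a non-reflexive space, and ``$y$ lies in the interior of the face'' (interior in which topology, of which face?) does not buy you this. You essentially concede the point yourself. I do not see how to close this without bringing in additional structure.

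The paper's argument avoids all of this by going straight to \emph{smooth} unit vectors. By Mazur's theorem the set $S$ of smooth unit vectors is dense in the unit sphere of a separable Banach space, and each $x \in S$ has a \emph{unique} norming functional $\lambda_x$. One first shows (by Hahn--Banach separation) that the weak$^*$-closed convex hull of $S^* = \{\lambda_x : x \in S\}$ is the whole dual unit ball; hence $g^*$ cannot fix $S^*$ pointwise unless $g^* = \id$. So there is some smooth $x$ with $\lambda_x \circ g \neq \lambda_x$. But $\lambda_x$ is the \emph{only} unit functional with value $1$ at $x$, so $\lambda_x(gx) = (\lambda_x \circ g)(x) < 1$: thus $\lambda_x$ norms $x$ and not $gx$. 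The use of smooth points is exactly what replaces your perturbation step --- uniqueness of the norming functional does the rigidity work that you were trying to squeeze out of convexity.
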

\begin{proof}
  Let $S \subseteq E$ denote the set of smooth unit vectors.
  By Mazur \cite{Mazur:Konvexe}, this is a dense subset of the unit ball.
  Let $S^* \subseteq E^*$ denote the set of unit linear functionals which norm elements of $S$.
  We claim that $\cco(S^*)$ is the unit ball of $E^*$ (closure being in the weak$^*$ topology).
  Indeed, if not, there there exists a unit linear functional $\lambda \notin \cco(S^*)$, and by Hahn-Banach (plus the fact that a weak$^*$-continuous linear functional on $E^*$ arises from a member of $E$) there exists a unit vector $u \in E$ such that $\lambda u > \sup\, \bigl\{ \mu u : \mu \in \cco(S^*) \bigr\}$.
  The same is true in some norm neighbourhood of $u$, and since $S$ is dense, we may assume that $u \in S$.
  But $\sup\, \bigl\{ \mu u : \mu \in S^* \bigr\} = 1$, a contradiction.

  If follows that $g$ cannot be the identity on $S^*$: there exists $\lambda \in S^*$, norming some $x \in S$, such that $\lambda g \neq \lambda$.
  Since $x$ is smooth, $\lambda g x  < 1$, i.e., $\lambda$ does not norm $gx$.
\end{proof}

\begin{proof}[Second proof of \autoref{prp:AutGurarijDiscreteUniformDistance}]
  This is based on the characterisation of $1$-types as \emph{convex Katětov functions} as in \cite{BenYaacov:UniversalGurarijIsometryGroup}.

  Let $g \in \Aut(\bG)$ be other than the identity.
  Let $x \in \bG$ and $\lambda \in \bG^*$ be of norm one, such that $\lambda x = 1$ and $\lambda g x < 1$, as per \autoref{lem:AutomorphismMovesNormingFunctional}.

  Let $E \subseteq \bG$ be the subspace generated by $x$ and $y = gx$.
  Let $\varepsilon = 1 - \lambda y > 0$.
  Thus $\|x - ty\| \geq 1 - t(1-\varepsilon) = 1 - t + t\varepsilon$.
  Let $M = 2/\varepsilon$, and define $f \colon E \rightarrow \bR$ by:
  \begin{gather*}
    f(z) = \inf_{0 \leq t \leq 1} 1 + (M - 2) t + \|z - t M y\|.
  \end{gather*}
  The function $f$ is convex and Lipschitz with constant $1$.
  In addition, for $z,z' \in E$ and $t,t' \in [0,1]$ we have
  \begin{align*}
    & 1 + (M - 2) t + \|z - t M y\| + 1 + (M - 2) t' + \|z' - t' M y\| \\
    & \qquad \geq \bigl\| (z-z') - (t-t') M y \bigr\| + 2 + (M-2)(t-t') \\
    & \qquad \geq \bigl\| (z-z') - (t-t') M y \bigr\| + M(t-t') \\
    & \qquad \geq \|z-z'\|,
  \end{align*}
  whereby $f(z)+f(z') \geq \|z-z'\|$.
  We conclude that $f$ is a convex Katětov function on $E$, so by \cite{BenYaacov:UniversalGurarijIsometryGroup} there exists a $1$-point extension $F \supseteq E$, generated over $E$ by, say, $w$, such that $\|z-w\| = f(z)$ for all $z \in E$.

  By the Gurarij property, for any $\delta > 0$ and finite set $A \subseteq E$ there exists $w' \in \bG$ such that $\Bigl| \|w'-z\| - f(z) \Bigr| < \delta$ for all $z \in A$.
  Let us do this for the finite set $\{0, Mx, My\}$.
  On the one hand, $f(My) \leq 1 + (M-1) \cdot 1 + \| My - My \| = M - 1$ (in fact there is equality).
  On the other hand, for $0 \leq t \leq 1$:
  \begin{align*}
    1 + (M-2) t + \| Mx- tMy\|
    &
      \geq 1 + (M-2) t + M(1 - t + t \varepsilon)
    \\ &
         = 1 - 2t + M + M t \varepsilon
    \\ &
         = M + 1.
  \end{align*}
  Thus $f(Mx) \geq M + 1$ and
  \begin{align*}
    \|gw' - w'\|
    &
      \geq \|gw' - My\| - \|w' - My\|
    \\ &
         = \|w' - Mx\| - \|w' - My\|
    \\ &
         \geq M + 1 - (M-1) - 2\delta
    \\ &
         \geq 2 - 2\delta.
  \end{align*}
  Since $\|w'\| \in [1-\delta,1+\delta]$, this suffices to prove that $\|g - \id\| = 2$.
\end{proof}

Thus, among the automorphism groups of $\aleph_0$-categorical structures studied so far, those which present the ``no transitive action'' pathology, or suspected of presenting it, are exactly those which are automorphism groups of ``essentially non-discrete'' $\aleph_0$-categorical structures and yet have discrete uniform distance.

\begin{qst}
  Let $G$ be a connected Roelcke-precompact Polish group.
  \begin{enumerate}
  \item Does the existence of a non-trivial transitive complete $G$-space imply that the uniform distance on $G$ is not discrete?
  \item Does the converse implication hold?
  \end{enumerate}
\end{qst}

\begin{rmk}
  In the context of closed permutation groups, i.e., closed subgroups of $\fS_\infty$, the discrete uniform distance allows us to distinguish oligomorphic groups from among the pro-oligomorphic ones.

  Indeed, recall from Tsankov \cite{Tsankov:OligomorphicGroupRepresentation} that,
  \begin{itemize}
  \item A topological group $G$ is \emph{oligomorphic} if (by definition) it can be realised as a closed permutation group of some countable set $X$ with oligomorphic action, i.e., such that $X^n/G$ is finite for all $n$
  \item It is pro-oligomorphic (i.e., an inverse limit of oligomorphic groups) if and only if it is a closed subgroup of $\fS_\infty$ and Roelcke-precompact.
  \end{itemize}

  First, in terms of automorphism groups, one checks that $G$ is oligomorphic (pro-oligomorphic) if and only if it is the automorphism group of an $\aleph_0$-categorical structure in a countable language and finitely (countably) many sorts $\fM$.

  Second, a pro-oligomorphic group is oligomorphic if and only if its uniform distance is discrete.
\end{rmk}

\section{Transitive complete $G$-spaces}
\label{sec:TransitiveAction}

This section is somewhat complementary to earlier sections, in that we characterise complete $G$-spaces which are transitive (for a Roelcke-precompact $G$).

\begin{lem}[{\cite[Lemma~3.8]{BenYaacov-Tsankov:WAP}}]
  \label{lem:ApproximatelyInvertible}
  Let $G$ be Roelcke-precompact and $R = \widehat{G}_L^2 \sslash G$ its Roelcke completion.
  For $\varepsilon > 0$ let $V_\varepsilon \subseteq R$ consist of all $p \in R$ such that $d\bigl( G p, 1 \bigr) < \varepsilon$, where $g[\xi,\zeta] = [\xi g^{-1},\zeta]$.
  Then $\bigcap_{\varepsilon>0} V_\varepsilon = \widehat G_L$ and $\bigcap_{\varepsilon>0} (V_\varepsilon \cap V_\varepsilon^*) = G$, where $[\xi,\zeta]^* = [\zeta,\xi]$.
\end{lem}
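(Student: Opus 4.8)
The plan is to unwind the definitions into a single explicit formula for $d(Gp,1)$ and to read off both intersections from it.

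First, writing $p=[\xi,\zeta]$ with $\xi,\zeta\in\widehat{G}_L$, I would compute $d(Gp,1)$ explicitly. Since $d$ on $R=\widehat{G}_L^2\sslash G$ is the set-distance for the max-metric on $\widehat{G}_L^2$, and the quotient is by the diagonal left action, which is by $d_L$-isometries, writing out the set-distance and using left-invariance of $d_L$ to absorb the amalgamating group element gives, for $g\in G$,
\[
  d(g\cdot p,\,1)=d\bigl([\xi g^{-1},\zeta],[1,1]\bigr)=\inf_{c\in G}\max\bigl(d_L(\xi g^{-1},c),\,d_L(\zeta,c)\bigr),
\]
hence $d(Gp,1)=\inf_{g,c\in G}\max\bigl(d_L(\xi g^{-1},c),d_L(\zeta,c)\bigr)$. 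As $c$ runs over the dense set $G\subseteq\widehat{G}_L$, this quantity lies between $\tfrac12\inf_{g\in G}d_L(\xi g^{-1},\zeta)$ and $\inf_{g\in G}d_L(\xi g^{-1},\zeta)$, so $p\in\bigcap_{\varepsilon>0}V_\varepsilon$ if and only if $\zeta\in\overline{\xi G}$, the closure in $\widehat{G}_L$ of $\xi G:=\{\xi g:g\in G\}$.

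Next I would identify $\{[\xi,\zeta]:\zeta\in\overline{\xi G}\}$ with the image of the canonical embedding $\widehat{G}_L\hookrightarrow R$, $\eta\mapsto[1,\eta]$. One inclusion is immediate, since $\overline{1\cdot G}=\widehat{G}_L$. For the other, I would first record the easy fact that $1\in\overline{G\eta}$ for every $\eta\in\widehat{G}_L$: if $b_n\in G$ and $b_n\to\eta$, then $d_L(b_n^{-1}\eta,1)=d_L(b_n^{-1}\eta,b_n^{-1}b_n)=d_L(\eta,b_n)\to0$. Given $[\xi,\zeta]$ with $\zeta\in\overline{\xi G}$, pick $b_n\to\xi$ in $G$; then $b_n^{-1}\xi\to1$, and comparing $b_n^{-1}\zeta$ with $(b_n^{-1}\xi)g^{-1}$ for $\xi g^{-1}$ close to $\zeta$ — the comparison being uniform in $n$ by left-invariance, while $(b_n^{-1}\xi)g^{-1}\to g^{-1}$ by continuity of the multiplication on $\widehat{G}_L$ — shows that $(b_n^{-1}\zeta)_n$ is $d_L$-Cauchy, say with limit $\eta\in\widehat{G}_L$. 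Since $[\xi,\zeta]=[b_n^{-1}\xi,b_n^{-1}\zeta]$ for every $n$ and the quotient map is continuous, $[\xi,\zeta]=[1,\eta]$; this gives $\bigcap_{\varepsilon>0}V_\varepsilon=\widehat{G}_L$.

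Finally, for the second equality: as $*$ interchanges the two coordinates, $\bigcap_{\varepsilon>0}V_\varepsilon^*=\bigl(\bigcap_{\varepsilon>0}V_\varepsilon\bigr)^*=\{[\theta,1]:\theta\in\widehat{G}_L\}$, so $\bigcap_{\varepsilon>0}(V_\varepsilon\cap V_\varepsilon^*)=\{[1,\eta]:\eta\in\widehat{G}_L\}\cap\{[\theta,1]:\theta\in\widehat{G}_L\}$, which contains the canonical copy of $G$ because $[g,1]=[1,g^{-1}]$. Conversely, if $[1,\eta]=[\theta,1]$, unwinding the orbit-closure relation produces $a_n\in G$ with $a_n\to\theta$ and $a_n\eta\to1$, whence $\theta\eta=1$ by continuity of the multiplication, and applying $*$ likewise $\eta\theta=1$; so $\eta$ is a unit of the monoid $\widehat{G}_L$ with inverse $\theta$. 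Writing $\eta=\lim b_n$ and $\theta=\lim c_n$ with $b_n,c_n\in G$ (limits in $d_L$), left-invariance converts $\eta\theta=\theta\eta=1$ into $b_n^{-1}\to\theta$ and $c_n^{-1}\to\eta$ in $d_L$; hence $(b_n)$ and $(b_n^{-1})$ are both $d_L$-Cauchy, i.e.\ $(b_n)$ is Cauchy for the two-sided uniformity of $G$, and since $G$, being Polish, is complete in that uniformity, it converges in $G$, forcing $\eta\in G$. The hard part is really this last step — that the unit group of the monoid $\widehat{G}_L$ is precisely $G$ — which is the only place where completeness of $G$ genuinely enters; everything before it is bookkeeping with the quotient, left-invariance, and continuity of the multiplication.
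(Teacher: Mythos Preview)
The paper does not give a proof of this lemma; it is simply quoted from \cite[Lemma~3.8]{BenYaacov-Tsankov:WAP}, so there is no in-paper argument to compare against. Your argument is correct and self-contained. The reduction of $d(Gp,1)$ to (a quantity uniformly equivalent to) $\inf_{g}d_L(\xi g^{-1},\zeta)$ is accurate; the Cauchy argument showing that every $[\xi,\zeta]$ with $\zeta\in\overline{\xi G}$ equals some $[1,\eta]$ is sound, its validity resting on the fact that right multiplication by a fixed element of $\widehat{G}_L$ is continuous on $\widehat{G}_L$ (which holds because the orbit map $g\mapsto g\eta$ is uniformly continuous for the left uniformity on $G$); and the final identification of the units of the monoid $\widehat{G}_L$ with $G$ via Raikov completeness of Polish groups is the standard argument. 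One minor remark: the ``easy fact'' $1\in\overline{G\eta}$ that you record in the second paragraph is never used afterwards and can be dropped.
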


\begin{thm}
  \label{thm:TransitiveAction}
  Let $G$ be Roelcke-precompact, $G \curvearrowright X$ be a minimal continuous action by isometry on a complete space $X$.
  Then the following are equivalent:
  \begin{enumerate}
  \item The action $G \curvearrowright X$ is transitive.
  \item
    \label{item:TransitiveActionRyllNardzewski}
    The space $\widehat G_L \sslash G_x$ is compact for some (all) $x \in X$.
  \item
    \label{item:TransitiveActionContinuity}
    For all $\varepsilon > 0$ and $x \in X$ there exists $\delta > 0$ such that whenever $y \in X$ and $d(x,y) < \delta$ there exist $\xi,\zeta \in \widehat{G}_L$ with $\xi x = \zeta y$ and $d_L(\xi,\zeta) < \varepsilon$.
  \end{enumerate}
\end{thm}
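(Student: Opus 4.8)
The plan is to translate all three conditions into model theory and then close the cycle (i) $\Rightarrow$ (ii) $\Rightarrow$ (iii) $\Rightarrow$ (i). First realize $G = \Aut(\fM_G)$, where $\fM_G$ is the tautological $\aleph_0$-categorical structure from the remarks following \autoref{thm:NoTrans} (its underlying metric space is $\widehat{G}_L$), and by \autoref{fct:Aleph0CategoricalImaginary} assume $X = [a]$ for an imaginary $a \in \fM_G^\meq$, with $x$ the distinguished point, so $G_x = \Aut(\fM_G, a)$. Then (i) $\Rightarrow$ (ii) is immediate: by \autoref{lem:Aleph0CategoricalNamedTransitive} transitivity of $G \curvearrowright [a]$ says $(\fM_G, a)$ is $\aleph_0$-categorical, hence by \autoref{fct:RyllNardzewski} $G_x$ acts approximately oligomorphically on $\fM_G$, so in particular $\widehat{G}_L \sslash G_x = (\fM_G)^1 \sslash G_x$ is compact.

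For (ii) $\Rightarrow$ (iii) I would argue by contradiction: if (iii) fails for some $\varepsilon$ and $x$, pick $y_n \to x$ such that every $\xi, \zeta \in \widehat{G}_L$ with $\xi x = \zeta y_n$ have $d_L(\xi, \zeta) \ge \varepsilon$; by minimality choose $g_n \in G$ with $g_n x$ close to $y_n$, so $g_n x \to x$; now pass to a subnet along which $[g_n]_{G_x}$ converges in the compact space $\widehat{G}_L \sslash G_x$, correct the $g_n$ by elements of $G_x$ (which fix $x$, so this does not move the relevant points much), and use \autoref{lem:RoelckePrecompactAmalgamation} to promote the approximate relation ``$g_n x \approx y_n$'' to an exact $\xi x = \zeta y_n$ with $d_L(\xi, \zeta)$ arbitrarily small, contradicting the choice of the $y_n$.

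For (iii) $\Rightarrow$ (i) the key observation is that (iii) yields, for any $p \in X$ and any $q$ close enough to $p$, an element $g \in G$ with $d_L(g, e) < \varepsilon$ and $gp$ arbitrarily close to $q$ (approximate the $\xi, \zeta$ of (iii) by $a_k, b_k \in G$ and take $g = b_k^{-1} a_k$, using left invariance of $d_L$). Iterating this at $x$ for a fixed $y$ close to $x$ produces $g_1, g_2, \dots \in G$ with $d_L(g_n, e) < \varepsilon 2^{-n}$ and $g_n^{-1}\cdots g_1^{-1} y \to x$. Then $\gamma_n := g_1 \cdots g_n$ is $d_L$-Cauchy, because $d_L(\gamma_{n+1}, \gamma_n) = d_L(\gamma_n g_{n+1}, \gamma_n e) = d_L(g_{n+1}, e)$, so $\gamma_n \to \gamma \in \widehat{G}_L$ with $d_L(\gamma, e) \le \varepsilon$, and since $y = \gamma_n(g_n^{-1}\cdots g_1^{-1} y)$ for all $n$, continuity of the $\widehat{G}_L$-action gives $y = \gamma x$. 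Thus $\widehat{G}_L x$ contains a ball around $x$; being $G$-invariant inside the minimal space $X$, it equals $X$, and the above in fact supplies a uniform modulus for a local section of $\xi \mapsto \xi x$ near $x$. Combining this with the symmetric form of (iii) and \autoref{lem:ApproximatelyInvertible} then upgrades transitivity of $\widehat{G}_L$ on $X$ to transitivity of $G$, i.e.\ (i).

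The main difficulty sits in that final upgrade. The telescoping construction only manufactures elements of the completion $\widehat{G}_L$; extracting an honest group element of $G$ moving $x$ to $y$ — equivalently, producing exact relations $\xi x = \zeta y$ with $d_L(\xi, \zeta)$ small, rather than merely approximate ones — is precisely what the two-sided part of \autoref{lem:ApproximatelyInvertible} ($\bigcap_\varepsilon (V_\varepsilon \cap V_\varepsilon^*) = G$) and the amalgamation \autoref{lem:RoelckePrecompactAmalgamation} are designed to handle, and it is the one point where the three-way distinction between $R(G)$, $\widehat{G}_L$ and $G$ genuinely bites.
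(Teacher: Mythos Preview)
Your cycle follows the paper's, and your (i) $\Rightarrow$ (ii) via \autoref{lem:Aleph0CategoricalNamedTransitive} and \autoref{fct:RyllNardzewski} is a clean alternative to the paper's direct argument through Effros and the Roelcke completion. The remaining two implications, however, are not complete as written.

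For (ii) $\Rightarrow$ (iii), the step ``use \autoref{lem:RoelckePrecompactAmalgamation} to promote $g_n x \approx y_n$ to an exact $\xi x = \zeta y_n$ with $d_L(\xi,\zeta)$ small'' does not follow: amalgamation produces \emph{some} pair with $\xi x = \zeta y_n$, but gives no control over $d_L(\xi,\zeta)$. Your correction of the $g_n$ by $G_x$ on the left moves $g_n x$ to $h_n g_n x$, not $y_n$, so the approximate relation you need is lost rather than improved. What you are missing is precisely that $\widehat{G}_L$ already acts \emph{transitively} on $X$ under (ii). The paper extracts this cleanly: the map $\widehat{G}_L \sslash G_x \to (\widehat{G}_L \times X)\sslash G$, $[\xi]\mapsto [\xi,x]$, has compact domain and dense image, hence is surjective, and $[\xi,x]=[1,y]$ is equivalent to $\xi y = x$. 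One may then choose $\zeta_n$ with $\zeta_n y_n = x$, pass to a subsequence with $h_n \zeta_n \to \zeta$ ($h_n \in G_x$), and observe $\zeta z = x = h_n\zeta_n y_n$ with $d_L(\zeta, h_n\zeta_n)\to 0$, contradicting failure of (iii).

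For (iii) $\Rightarrow$ (i), your telescoping is correct and yields the pleasant intermediate fact that for $y$ near $x$ there is $\gamma\in\widehat{G}_L$ with $\gamma x = y$ and $d_L(\gamma,e)$ small. But the last paragraph is where the proof lives, and you have not written it. \autoref{lem:ApproximatelyInvertible} only helps once you know each $V_\varepsilon \cap R_{x,y}$ is \emph{dense} in $R_{x,y}$ (then Baire finishes); showing $R_{x,y}$ meets small neighbourhoods of $1_R$ is not enough. The paper's argument is: given $[\xi,\zeta]\in R_{x,y}$, choose representatives with $\zeta$ near $1$ and $g\in G$ near $\xi$, so $gx$ is near $y$; condition (iii) then supplies $[\rho,\chi]\in R_{gx,y}$ with $d_L(\rho,\chi)$ small, and $[\rho g,\chi]\in R_{x,y}\cap V_\varepsilon$ is close to $[\xi,\zeta]$ (using $[\rho g,\rho]=[g,1]$). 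Your telescoping output could substitute for (iii) here (take $\rho=\gamma$, $\chi=1$), but the density computation itself is the content, and it is absent from your proposal.
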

\begin{proof}
  \begin{cycprf}
  \item
    By Effros \cite{Effros:TransformationGroups}, we may assume that $X = G/H$ and $x = H \in G/H$.
    Let $[\xi_n]_H$ be a sequence in $\widehat G_L \sslash H$.
    Since $G$ is Roelcke-precompact we may assume that $[1,\xi_n] \rightarrow [\zeta,\xi]$ in $R$.
    Since $\widehat{G}_L$ acts on $G/H$, we have $\zeta H = f H$ for some $f \in G$.

    There exist $g_n \in G$ such that $g_n \xi_n \rightarrow \xi$ and $g_n \rightarrow \zeta$.
    Therefore $g_n H \rightarrow f H$, i.e. $g_n h_n^{-1} \rightarrow f$ for some $h_n \in H$.
    Then $h_n \xi_n \rightarrow f^{-1} \xi$, i.e., $[\xi_n]_H \rightarrow [f^{-1} \xi]_H$, whence compactness.
  \item
    Assume $\widehat G_L \sslash H$ is compact where $H = G_x$ for some $x \in X$.
    Consider the space $\widetilde{X} = \widehat{G}_L \times X \sslash G$.
    It is compact, and the map $\pi\colon \widehat{G}_L \rightarrow \widetilde{X}$, $\pi \xi = [\xi,x]$, factors via a continuous map $\overline{\pi}\colon \widehat{G}_L \sslash H \rightarrow \widetilde{X}$.
    Since $\overline{\pi}$ has dense image, it is surjective, and one checks easily that $[\xi,x] = [1,y]$ if and only if $\xi y = x$.
    Assume for a contradiction that there exist $\varepsilon > 0$ and $z,y_n \in X$ such that $y_n \rightarrow z$ and yet $d_L(\xi,\zeta_n) \geq \varepsilon$ whenever $\xi z = \zeta_n y_n$.
    Choose $\zeta_n$ such that $\zeta_n y_n = x$, which is possible since $\overline{\pi}$ is surjective.
    By hypothesis we may assume that $[\zeta_n]_H \rightarrow [\zeta]_H$, i.e., $h_n \zeta_n \rightarrow \zeta$ with $h_n \in H$.
    Then $\zeta z = x$ as well, a contradiction.
  \item[\impfirst]
    Let $x,y \in X$ and let $R_{x,y} = \bigl\{ [\xi,\zeta] \in R : \xi x = \zeta y \bigr\}$, observing that the condition $\xi x = \zeta y$ only depends on the class $[\xi,\zeta]$.
    We want to show that $G \cap R_{x,y} \neq \emptyset$.
    By \autoref{lem:ApproximatelyInvertible}, it will suffice to show that $V_\varepsilon \cap R_{x,y}$ is dense in $R_{x,y}$ for all $\varepsilon > 0$.
    Indeed, let $[\xi,\zeta] \in R_{x,y}$ and $\varepsilon > 0$.
    We may assume that $\zeta$ is arbitrarily close to $1$, and choose $g \in G$ arbitrarily close to $\xi$.
    In particular, $gx$, $\xi x = \zeta y$ and $y$ are arbitrarily close.
    By hypothesis there exists $[\rho,\chi] \in R_{gx,y}$ such that $d_L(\rho,\chi)$ is small, say $d_L(g,\xi) + d_L(1,\zeta) + d_L(\rho,\chi) < \varepsilon$.
    Then $[\rho g,\chi] \in R_{x,y}$ and
    \begin{gather*}
      d\bigl( g [\rho g,\chi], 1 \bigr) = d\bigl( [\rho,\chi], [\rho,\rho] \bigr) < \varepsilon,
    \end{gather*}
    so $[\rho g,\chi] \in V_\varepsilon$.
    In addition,
    \begin{gather*}
      d\bigl( [\rho g,\chi], [\xi,\zeta] \bigr) \leq  d\bigl( [\rho g,\chi], [\rho g, \rho] \bigr) + d\bigl( [g,1], [\xi,\zeta] \bigr) < \varepsilon,
    \end{gather*}
    whence the desired result.
  \end{cycprf}
\end{proof}

The reader may recognise in condition \autoref{item:TransitiveActionRyllNardzewski} the criterion of \autoref{fct:RyllNardzewski} (for the structure in which $x$ is named by a constant), while condition \autoref{item:TransitiveActionContinuity} echoes the notion of $d$-finiteness from \cite{BenYaacov-Usvyatsov:dFiniteness}.

Let us consider a Polish group $G$ and a minimal complete $G$-space $G \curvearrowright X$.
If $X$ contains a $G_\delta$ orbit $Gx$ (necessarily dense) then this orbit is homeomorphic to $G/H$ where $H = G_x$, by Effros \cite{Effros:TransformationGroups}.
Conversely, if $G/H$ admits a $G$-invariant compatible distance $d$, then the completion $X = \widehat{(G/H,d)}$ is a minimal complete $G$-space, containing a $G_\delta$ orbit $G/H$.
However, such a distance need not always exist (e.g., for $G = S_\infty = \Aut(\bQ,=)$ and $H = \Aut(\bQ,<)$).
While a criterion for the existence of such a distance can be recovered from \cite{BenYaacov-Melleray:Isometrisable}, a much more direct approach exists (this criterion was also observed independently by Tsankov).

\begin{lem}
  \label{lem:NormallyClosed}
  Let $G$ be a Polish group and $H \leq G$ a closed subgroup.
  Then the following are equivalent:
  \begin{enumerate}
  \item The quotient space $G/H$ admits a compatible $G$-invariant distance.
  \item For every open neighbourhood of identity $1 \in U \subseteq G$ there exists a smaller neighbourhood $V$ such that $VH \subseteq HU$ (and thus $HVH \subseteq HU$).
  \end{enumerate}
\end{lem}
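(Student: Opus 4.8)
The plan is to prove the two implications separately; everything nontrivial is in $(2)\Rightarrow(1)$, which I would handle by a Birkhoff--Kakutani construction carried out inside $G$ with \emph{bi-$H$-invariant} neighbourhoods.

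For $(1)\Rightarrow(2)$: fix a $G$-invariant compatible metric $d$ on $G/H$, write $\bar g=\pi(g)=gH$ and $\bar 1=H$, and let $1\in U\subseteq G$ be open, which we may take symmetric. Since $\pi$ is open, $\pi(U)$ is a neighbourhood of $\bar 1$, so $B_d(\bar 1,\varepsilon)\subseteq\pi(U)$ for some $\varepsilon>0$. Then $W:=\pi^{-1}\bigl(B_d(\bar 1,\varepsilon)\bigr)=\{g:d(\bar g,\bar 1)<\varepsilon\}$ is open, contains $1$, is right-$H$-invariant (a $\pi$-preimage) and also left-$H$-invariant, because $d(h\bar g,\bar 1)=d(h\bar g,h\bar 1)=d(\bar g,\bar 1)$ for $h\in H$; moreover $W\subseteq\pi^{-1}(\pi(U))=UH$. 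Taking $V:=W^{-1}$ (again an open neighbourhood of $1$, again bi-$H$-invariant) gives $VH=W^{-1}\subseteq(UH)^{-1}=HU$, and then $HVH\subseteq HHU=HU$.

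For $(2)\Rightarrow(1)$ the goal is a continuous, subadditive, symmetric, bi-$H$-invariant ``norm'' $\rho\colon G\to[0,1]$ with $\rho^{-1}(0)=H$: then $d(\bar g,\bar g'):=\rho(g^{-1}g')$ is well defined by bi-$H$-invariance, is visibly a $G$-invariant metric, and is compatible provided the neighbourhood basis used to build $\rho$ is cofinal among the $\pi$-preimages of neighbourhoods of $\bar 1$. Condition $(2)$ enters twice. First, it yields the \emph{key fact}: every right-$H$-invariant open $W^{*}\supseteq H$ contains $HQH$ for some neighbourhood $Q$ of $1$ --- indeed $(W^{*})^{-1}$ is left-$H$-invariant and open, hence contains $HU_0$ for some open $U_0\ni1$; applying $(2)$ to $U_0$ gives $V$ with $VH\subseteq HU_0\subseteq(W^{*})^{-1}$, i.e.\ $HV^{-1}\subseteq W^{*}$, so a symmetric $Q\subseteq V^{-1}$ works. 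Second, $(2)$ shows that every bi-$H$-invariant open $W\supseteq H$ contains a symmetric bi-$H$-invariant open $W'\supseteq H$ with $W'^{3}\subseteq W$: take symmetric $Q_0$ with $HQ_0H\subseteq W$ (key fact) and $Q$ with $Q^{2}\subseteq Q_0$, pick symmetric $R\subseteq Q$ with $RH\subseteq HQ$ by $(2)$, so $RHR\subseteq HQ^{2}\subseteq HQ_0H\subseteq W$ and hence $(HRH)^{2}=H(RHR)H\subseteq W$; iterating once gives the cube. Now fix a countable neighbourhood basis $(\mathcal O_n)$ of $\bar 1$ in $G/H$ (it exists since $G/H$ is second countable, being the open continuous image of $G$) with $\bigcap_n\mathcal O_n=\{\bar 1\}$ ($G/H$ is $T_1$ as $H$ is closed), and recursively choose symmetric bi-$H$-invariant open $G=W_0\supseteq W_1\supseteq\cdots$, each containing $H$, with $W_{n+1}^{3}\subseteq W_n$ and $W_{n+1}\subseteq\pi^{-1}(\mathcal O_{n+1})$: at stage $n$ apply the key fact inside the right-$H$-invariant open set $\pi^{-1}(\mathcal O_{n+1})\cap W_n$, then shrink for the cube condition. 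Then $\bigcap_nW_n=H$, Birkhoff--Kakutani applied to $(W_n)$ produces $\rho$ with $\tfrac12 f\le\rho\le f$ where $f=2^{-n}$ on $W_n\setminus W_{n+1}$ and $f=0$ on $H$, bi-$H$-invariance of the $W_n$ passes to $\rho$, and $\rho^{-1}(0)=H$. Finally $d$ is compatible: it is continuous since $\rho$ is, and any quotient-open $\mathcal O\ni\bar 1$ contains some $\mathcal O_n$, whence $\{g:\rho(g)<2^{-n-1}\}\subseteq\{f<2^{-n}\}\subseteq W_{n+1}\subseteq\pi^{-1}(\mathcal O)$, i.e.\ $B_d(\bar 1,2^{-n-1})\subseteq\mathcal O$.

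The hard part is the middle step --- manufacturing bi-$H$-invariant neighbourhoods of $H$ that still obey the Birkhoff--Kakutani chain condition, which amounts to pushing occurrences of $H$ across small neighbourhoods and is exactly where $(2)$ is indispensable (without it there need be no invariant metric at all, as for $S_\infty/\Aut(\bQ,<)$). Once the chain $(W_n)$ is in hand, the remainder is the classical metrization argument with the $G$-invariance bolted on.
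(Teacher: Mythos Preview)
Your proof is correct. For $(1)\Rightarrow(2)$ you do essentially what the paper does: pull back a small metric ball to obtain a symmetric bi-$H$-invariant neighbourhood contained in $UH$, then invert to land inside $HU$.

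For $(2)\Rightarrow(1)$ the paper takes a different, shorter packaging. Rather than running an explicit Birkhoff--Kakutani construction, it directly defines a uniformity on $G/H$ by declaring the entourages to be
\[
W_V = \bigl\{ (gH,fH) : f^{-1}g \in HVH \bigr\}
\]
for symmetric open $V\ni 1$, observes that these are $G$-invariant, and appeals to the metrization theorem for uniformities with a countable base. Condition~(2) is used precisely once, to check the refinement axiom $W_{V'}\circ W_{V'}\subseteq W_V$ (which amounts to $HV'HV'H\subseteq HVH$); compatibility with the quotient topology is then a one-line check. Your argument unpacks this same idea: the ``key fact'' and the cube-shrinking step are exactly what is needed to verify the uniformity axioms, and your chain $(W_n)$ is a countable base for that uniformity, with Birkhoff--Kakutani providing the metric explicitly rather than by citation. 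So the two proofs are the same in substance; the paper's version trades the explicit construction for a cleaner appeal to metrization of uniform spaces, while yours is more self-contained and makes visible where each use of~(2) occurs.
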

\begin{proof}
  Assume that $d$ be a compatible $G$-invariant distance on $G/H$.
  Let $U$ be a neighbourhood of $1$, which we may assume to be symmetric.
  Then $UH \subseteq G$ is open, and so is $U \cdot H \subseteq G/H$, so we have $B(H,r) \subseteq U \cdot H$ for some $r > 0$.
  Let $V = \bigl\{ g \in G : d(gH,H) < r \bigr\}$.
  Then $V$ is open, $VH = V \subseteq UH$, and since $d$ is $G$-invariant, $V = V^{-1}$, so $HV = V \subseteq UH$.

  Conversely, assume that the second condition holds.
  For open $1 \in V = V^{-1} \subseteq G$ define $W_V \subseteq (G/H)^2$ as $\bigl\{ (gH,fH) : f^{-1}g \in HVH \bigr\}$.
  The family of all such $W_V$ forms a uniform structure on $G/H$, with $G$-invariant entourages and countable basis, so it admits a $G$-invariant metrisation.
  In addition, any neighbourhood of $H$ in $G/H$ is of the form $U \cdot H$ for $1 \in U \subseteq G$ open, and taking $V$ as in the hypothesis we see that $\bigl\{ gH : (H,gH) \in W_V \bigr\} \subseteq U \cdot H$, so it is compatible with the topology on $G/H$.
\end{proof}

\begin{cor}
  Let $G$ be a Roelcke-precompact Polish group, and $H \leq G$ a closed subgroup such that
  \begin{enumerate}
  \item $H$ satisfies the equivalent conditions of \autoref{lem:NormallyClosed}.
  \item $\widehat{G}_L \sslash H$ is compact.
  \end{enumerate}
  Then there exists a transitive complete $G$-space $X$ such that $H$ is the stabiliser of some $x \in X$.
  Moreover, $X$ is determined by $H$, up to uniformly continuous isomorphism of $G$-spaces, and every transitive $G$-space is of this form.
\end{cor}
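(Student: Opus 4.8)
The plan is to realise $X$ as a completion of $G/H$ and extract transitivity from \autoref{thm:TransitiveAction}. By hypothesis and the equivalence in \autoref{lem:NormallyClosed}, fix a compatible $G$-invariant distance $d$ on $G/H$ and let $X = \widehat{(G/H,d)}$. The isometric action $G \curvearrowright (G/H,d)$ extends to an isometric action on the completion, and joint continuity survives the passage to the completion (for $y \in X$, approximating $y$ by points of $G/H$ controls $d(gy,g_0y)$ in terms of the known continuity of $g \mapsto g y'$ for $y' \in G/H$), so $X$ is a complete $G$-space. As $G/H$ is a dense orbit, $X \sslash G$ is a singleton, so $X$ is minimal; and since $G/H$ sits isometrically and equivariantly inside $X$, the stabiliser of $x := eH$ computed in $X$ is still exactly $H$. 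Hypothesis (ii) now says $\widehat{G}_L \sslash G_x = \widehat{G}_L \sslash H$ is compact, so by \autoref{thm:TransitiveAction}, condition~\ref{item:TransitiveActionRyllNardzewski}, the action $G \curvearrowright X$ is transitive. This gives the desired $X$ with $G_x = H$.

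\textbf{Every transitive space arises.} Conversely, let $X$ be any transitive complete $G$-space and $x \in X$. Then $H := G_x$ is closed; by Effros \cite{Effros:TransformationGroups} the orbit map $G/H \to X$, $gH \mapsto gx$, is a homeomorphism, so $d_X$ transfers to a compatible $G$-invariant distance on $G/H$, which is hypothesis (i); and hypothesis (ii) holds by \autoref{thm:TransitiveAction} again. Since $(G/H, d_X)$ is already complete, $X$ is, isometrically and equivariantly, the space built from $H$ above, so every transitive complete $G$-space is of this form.

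\textbf{Uniqueness.} Let $X,X'$ be transitive complete $G$-spaces with $G_x = G_{x'} = H$ for some $x \in X$, $x' \in X'$, and define $\psi \colon X \to X'$ by $\psi(gx) = gx'$. This is well defined, bijective and $G$-equivariant, because $gx = g'x \Leftrightarrow g^{-1}g' \in H = G_x = G_{x'} \Leftrightarrow gx' = g'x'$, and because $Gx = X$ and $Gx' = X'$ by transitivity. For uniform continuity of $\psi$ it suffices, by $G$-invariance of the metrics, to produce for each $\varepsilon > 0$ a $\delta > 0$ with $\{g : d_X(x,gx) < \delta\} \subseteq U' := \{g : d_{X'}(x',gx') < \varepsilon\}$. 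Here $U'$ is an open neighbourhood of $1$ (the action on $X'$ is continuous), and $U'H = U'$ since $H = G_{x'}$, so, $\pi \colon G \to G/H$ being open, $\pi(U')$ is an open neighbourhood of $eH$ in $G/H$. Using Effros once more, the sets $\{gH : d_X(x,gx) < \delta\}$ (the condition depends only on $gH$) form a neighbourhood base of $eH$ for the quotient topology, so some such set lies in $\pi(U')$; pulling back along $\pi$ and using $U'H = U'$ gives $\{g : d_X(x,gx) < \delta\} \subseteq U'$, as wanted. By symmetry $\psi^{-1}$ is uniformly continuous, so $\psi$ is a uniformly continuous isomorphism of $G$-spaces sending $x$ to $x'$, which establishes that $X$ is determined by $H$ up to uniformly continuous $G$-space isomorphism.

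\textbf{Main difficulty.} The real work is in managing the passage between the coset topology on $G/H$ and genuine metrics: Effros's theorem is what makes that passage two-sided, hypothesis (i) is precisely what guarantees a $G$-invariant metric exists to begin with, and hypothesis (ii) is exactly what \autoref{thm:TransitiveAction} needs in order to upgrade minimality to transitivity. The only fiddly point beyond that is checking that completing $(G/H,d)$ alters neither the minimality nor the stabiliser of $eH$; everything else is routine. (One could alternatively deduce uniqueness from \autoref{prp:GSpaceMorphism} applied to the projections of an orbit closure in $X \times X'$, but one then has to know that orbit closure is itself minimal, so the direct argument above is cleaner.)
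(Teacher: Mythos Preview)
Your proof is correct and follows essentially the same route as the paper's. The paper is much terser: for the forward direction it simply says ``$G/H$ admits a $G$-invariant distance, and it is complete by \autoref{thm:TransitiveAction}'', which packages exactly your argument (complete, observe minimality and $G_x=H$, apply \autoref{thm:TransitiveAction}(\ref{item:TransitiveActionRyllNardzewski}), conclude the orbit $G/H$ already exhausts the completion). For uniqueness the paper just remarks that Effros gives a homeomorphism $G/H \to X$ and that ``this determines a unique compatible $G$-invariant uniform structure on $X$'', leaving the verification implicit (it also follows at once from \autoref{prp:GSpaceMorphism}(ii), as you note in your closing paragraph); your hands-on argument via Effros and the quotient topology is a perfectly good substitute, just longer.
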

\begin{proof}
  If $H$ satisfies the two hypotheses then $G/H$ admits a $G$-invariant distance, and it is complete by \autoref{thm:TransitiveAction}.
  Conversely, if $X$ is a complete transitive $G$-space and $H = G_x$ for some $x \in X$ then $G/H \rightarrow X$, $gH \mapsto gx$, is a homeomorphism by Effros \cite{Effros:TransformationGroups}, and this determines a unique compatible $G$-invariant uniform structure on $X$.
\end{proof}

We conclude by showing that the natural action of a Roelcke-precompact group on its Bohr compactification is transitive (subsequent to the circulation of a draft of the present paper, Todor \textsc{Tsankov} announced a more direct proof).
Recall that the \emph{Bohr compactification} of a topological group $G$ is a universal continuous morphism into a compact group $\varphi\colon G \rightarrow B$: any other continuous morphism into a compact group factors uniquely through $\varphi$.
The Bohr compactification always exists and has dense image.
We may equip it with a compatible invariant distance, rendering it a minimal complete $G$-space (via $g \cdot b = \varphi(g)b$).

\begin{thm}
  \label{thm:SurjectiveBohrCompactification}
  Let $G$ be a Roelcke-precompact Polish group and $\varphi\colon G \rightarrow B$ its Bohr compactification.
  Then $\varphi$ is surjective.
\end{thm}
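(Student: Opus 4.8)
The plan is to deduce this from \autoref{prp:Aleph0CategoricalNamedACL}, by viewing $B$ — equipped with the left translation action $g\cdot b=\varphi(g)b$ — as a minimal complete $G$-space and identifying it with the orbit space of an imaginary element that is algebraic over $\emptyset$.

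First, fix an $\aleph_0$-categorical structure $\fM$ in a countable language with $\Aut(\fM)=G$; such an $\fM$ exists because $G$ is a Roelcke-precompact Polish group (for instance the tautological structure $\fM_G$ described after \autoref{thm:NoTrans}). A compatible invariant distance on $B$ makes it a minimal complete metric $G$-space in the sense of \autoref{dfn:MetricGSpace}: the action is isometric by invariance of the distance and jointly continuous because $\varphi$ and the multiplication of $B$ are continuous, while minimality follows from the density of $\varphi(G)$ together with the fact that right translations of $B$ are homeomorphisms. Hence, by \autoref{fct:Aleph0CategoricalImaginary}, there is $a\in\fM^\meq$ and an isometric $G$-isomorphism $[a]\cong B$, where in addition $[a]=\{c\in\fM^\meq:c\equiv a\}$.

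The one substantive step is then immediate: since $[a]\cong B$ is compact, the set $\{c\in\fM^\meq:c\equiv a\}$ is compact, i.e.\ $a\in\acl^\meq(\emptyset)$ (\autoref{dfn:MetricDACL}). Applying \autoref{prp:Aleph0CategoricalNamedACL} to the singleton $\{a\}\subseteq\acl^\meq(\emptyset)$, the structure $(\fM,a)$ is $\aleph_0$-categorical, so \autoref{lem:Aleph0CategoricalNamedTransitive} shows that $G\curvearrowright[a]$ is transitive. Transporting this across the isomorphism, $G\curvearrowright B$ is transitive, so in particular the $G$-orbit of the identity $1_B$ is all of $B$; but that orbit is precisely $\{\varphi(g)\cdot 1_B:g\in G\}=\varphi(G)$, and we are done.

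I do not expect a genuine obstacle here — all of the difficulty is already packaged in \autoref{prp:Aleph0CategoricalNamedACL} — so the only points requiring care are the bookkeeping verifications that $B$ really does satisfy the axioms of a minimal complete $G$-space, and the use of the identification $[a]=\{c:c\equiv a\}$ to pass from compactness of $[a]$ to algebraicity of $a$ over $\emptyset$. For a proof that stays inside the present section one can instead verify condition \autoref{item:TransitiveActionRyllNardzewski} of \autoref{thm:TransitiveAction} at $x=1_B$, namely that $\widehat{G}_L\sslash\ker\varphi$ is compact; there $\ker\varphi$ is the stabiliser of $1_B$, it is coded by parameters lying in $\acl^\meq(\emptyset)$, and the compactness is established just as in \autoref{prp:Aleph0CategoricalNamedACL}, by extracting an indiscernible sequence and observing that it remains indiscernible over $\acl^\meq(\emptyset)$.
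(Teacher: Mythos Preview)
Your argument is correct, but it is not the route the paper takes for its main proof; it is precisely the model-theoretic alternative that the paper records immediately afterwards in \autoref{rmk:SurjectiveBohrEquivalent} (the implication from \autoref{prp:Aleph0CategoricalNamedACL} to surjectivity of the Bohr map). The paper's own proof of \autoref{thm:SurjectiveBohrCompactification} is purely topological and self-contained within \autoref{sec:TransitiveAction}: it defines $\psi(\xi,\zeta)=\xi_B^{-1}\zeta_B$ on $\widehat G_L^2$, observes that $\psi$ factors through the Roelcke completion $R$ and is therefore a topological quotient map onto $B$, and then shows that the function $N(b)=\inf\{d_L(\chi,\rho):\psi(\chi,\rho)=b\}$ is continuous, which verifies condition~\autoref{item:TransitiveActionContinuity} of \autoref{thm:TransitiveAction}. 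This avoids indiscernible sequences and the imaginary-sort apparatus entirely. What your route buys is brevity, since the work is already done in \autoref{prp:Aleph0CategoricalNamedACL}; what the paper's route buys is a proof whose ingredients match the topological nature of the statement, as the author explicitly notes (``a highly convoluted one, though, given the pure topological nature of the statement''). Your closing paragraph sketching a verification of condition~\autoref{item:TransitiveActionRyllNardzewski} via indiscernibles is closer in spirit to the paper's aim of staying within \autoref{sec:TransitiveAction}, but still imports the model-theoretic mechanism of \autoref{prp:Aleph0CategoricalNamedACL} rather than the direct continuity argument the paper actually gives.
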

\begin{proof}
  For $\xi,\zeta \in \widehat{G}_L$ define $\xi_B = \xi \cdot 1_B$ and $\psi(\xi,\zeta) = \xi_B^{-1} \zeta_B$.
  This map $\psi$ is continuous and $G$-invariant, so it factors as $\overline{\psi} \circ \pi$, where $\pi(\xi,\zeta) = [\xi,\zeta] \in R$ and $\overline{\psi}\colon R \rightarrow B$ is continuous.
  Since $\overline{\psi}$ has dense image and $R$ and $B$ are compact, $\overline{\psi}$ is a topological quotient map.
  Since $\pi$ is one as well, so is the composition $\psi$.
  Define
  \begin{gather*}
    N\colon B \rightarrow \bR, \qquad N(b) = \inf \, \bigl\{ d_L(\chi,\rho) : \psi(\chi,\rho) = b \bigr\}.
  \end{gather*}
  Fix $\xi,\zeta \in \widehat{G}_L$, and consider a pair $\chi,\rho$ such that $\psi(\chi,\rho) = \psi(\xi,\zeta)$.
  By \autoref{lem:RoelckePrecompactAmalgamation} we may find $s,t,u,v \in \widehat{G}_L$ such that $s\xi = u\chi$ and $t\zeta = v\rho$, and by \autoref{lem:RoelckePrecompactAmalgamation} again we may assume that $u = v$.
  Then $d_L(\chi,\rho) = d_L(u\chi,u\rho) = d_L(s\xi,t\zeta)$ and $u_B^{-1} s_B = \chi_B \xi_B^{-1} = \rho_B \zeta_B^{-1} = u_B^{-1} t_B$, so $s_B = t_B$ and
  \begin{gather*}
    N \circ \psi(\xi,\zeta) = \inf\, \bigl\{ d_L(s\xi,t\zeta) : s,t \in \widehat{G}_L, \, s_B = t_B \bigr\}.
  \end{gather*}
  We conclude that $N \circ \psi$ is continuous on $\widehat{G}_L^2$.
  Since $\psi$ is a topological quotient, $N$ is continuous.
  Therefore condition \autoref{item:TransitiveActionContinuity} of \autoref{thm:TransitiveAction} holds, so $G \curvearrowright B$ is transitive, i.e., $\varphi$ is surjective.
\end{proof}

\begin{rmk}
  \label{rmk:SurjectiveBohrEquivalent}
  Let $G$ be a Roelcke-precompact Polish group and $\fM$ an $\aleph_0$-categorical structure with $G = \Aut(\fM)$.
  Then the following assertions are easily shown to be equivalent:
  \begin{enumerate}
  \item The Bohr compactification $\varphi\colon G \rightarrow B$ is surjective.
  \item Any compact minimal metric $G$-space is transitive.
  \item \autoref{prp:Aleph0CategoricalNamedACL} holds for $\fM$.
  \end{enumerate}
  Indeed:
  \begin{cycprf}[1]
  \item
    Let $X$ be a compact minimal metric $G$-space.
    Then $\Iso(X)$ is compact and the action $G \curvearrowright X$ induces a continuous map $G \rightarrow \Iso(X)$ which must factor through $B$.
    Since $B$ is compact and the action $B \curvearrowright X$ is minimal, it is in fact transitive.
    Since $G \rightarrow B$ is surjective, $G \curvearrowright X$ is transitive.
  \item
    Let $a \in \fM^\meq$ be such that $\dcl^\meq(a) = \acl^\meq(\emptyset)$ (such $a$ exists by \autoref{lem:Countable}).
    Then $a \in \acl^\meq(\emptyset)$, so $[a]$ is compact and $G \curvearrowright [a]$ is transitive, by hypothesis.
    By \autoref{lem:Aleph0CategoricalNamedTransitive} again, $(\fM,a)$ is $\aleph_0$-categorical, i.e., $\bigl( \fM, \acl^\meq(\emptyset) \bigr)$ is.
  \item[\impfirst]
    By \autoref{fct:Aleph0CategoricalImaginary}, we may identify $B$ with a metric imaginary sort in $\fM$.
    Since $B$ is compact we have $B \subseteq \acl^\meq(\emptyset)$, so by hypothesis (namely, by \autoref{prp:Aleph0CategoricalNamedACL}), $(\fM,1_B)$ is $\aleph_0$-categorical as well.
    By \autoref{lem:Aleph0CategoricalNamedTransitive} $G \curvearrowright B$ is transitive.
  \end{cycprf}

  In other words, we already had a model-theoretic argument for \autoref{thm:SurjectiveBohrCompactification} -- a highly convoluted one, though, given the pure topological nature of the statement.
\end{rmk}

\providecommand{\bysame}{\leavevmode\hbox to3em{\hrulefill}\thinspace}


\begin{thebibliography}{{Ben}14b}

\bibitem[BBH11]{BenYaacov-Berenstein-Henson:LpBanachLattices}
Itaï \bgroup\scshape{}{Ben Yaacov}\egroup{}, Alexander
  \bgroup\scshape{}Berenstein\egroup{}, and C.~Ward
  \bgroup\scshape{}Henson\egroup{},
  \href{http://math.univ-lyon1.fr/~begnac/articles/Lp.pdf}
  {\emph{Model-theoretic independence in the {B}anach lattices {$L_p(\mu)$}}},
  Israel Journal of Mathematics \textbf{183} (2011), 285--320,
  \href{http://dx.doi.org/10.1007/s11856-011-0050-4}{doi:10.1007/s11856-011-0050-4},
  \href{http://arxiv.org/abs/0907.5273}{arXiv:0907.5273}.

\bibitem[BBHU08]{BenYaacov-Berenstein-Henson-Usvyatsov:NewtonMS}
Itaï \bgroup\scshape{}{Ben Yaacov}\egroup{}, Alexander
  \bgroup\scshape{}Berenstein\egroup{}, C.~Ward
  \bgroup\scshape{}Henson\egroup{}, and Alexander
  \bgroup\scshape{}Usvyatsov\egroup{},
  \href{http://math.univ-lyon1.fr/~begnac/articles/mtfms.pdf} {\emph{Model
  theory for metric structures}}, Model theory with applications to algebra and
  analysis. {V}ol. 2, London Math. Soc. Lecture Note Ser., vol. 350, Cambridge
  Univ. Press, Cambridge, 2008, pp.~315--427,
  \href{http://dx.doi.org/10.1017/CBO9780511735219.011}{doi:10.1017/CBO9780511735219.011}.

\bibitem[BBM13]{BenYaacov-Berenstein-Melleray:TopometricGroups}
Itaï \bgroup\scshape{}{Ben Yaacov}\egroup{}, Alexander
  \bgroup\scshape{}Berenstein\egroup{}, and Julien
  \bgroup\scshape{}Melleray\egroup{},
  \href{http://math.univ-lyon1.fr/~begnac/articles/TopMetGen.pdf} {\emph{Polish
  topometric groups}}, Transactions of the American Mathematical Society
  \textbf{365} (2013), no.~7, 3877--3897,
  \href{http://dx.doi.org/10.1090/S0002-9947-2013-05773-X}{doi:10.1090/S0002-9947-2013-05773-X},
  \href{http://arxiv.org/abs/1007.3367}{arXiv:1007.3367}.

\bibitem[{Ben}12]{BenYaacov:UniformCanonicalBases}
Itaï \bgroup\scshape{}{Ben Yaacov}\egroup{},
  \href{http://math.univ-lyon1.fr/~begnac/articles/UniformCB.pdf} {\emph{On
  uniform canonical bases in {$L_p$} lattices and other metric structures}},
  Journal of Logic and Analysis \textbf{4} (2012), Paper 12, 30,
  \href{http://dx.doi.org/10.4115/jla.2012.4.12}{doi:10.4115/jla.2012.4.12},
  \href{http://arxiv.org/abs/1008.5031}{arXiv:1008.5031}.

\bibitem[{Ben}13]{BenYaacov:RandomVariables}
\bysame, \href{http://math.univ-lyon1.fr/~begnac/articles/RandVar.pdf}
  {\emph{On theories of random variables}}, Israel Journal of Mathematics
  \textbf{194} (2013), no.~2, 957--1012,
  \href{http://dx.doi.org/10.1007/s11856-012-0155-4}{doi:10.1007/s11856-012-0155-4},
  \href{http://arxiv.org/abs/0901.1584}{arXiv:0901.1584}.

\bibitem[{Ben}14a]{BenYaacov:UniversalGurarijIsometryGroup}
\bysame, \href{http://math.univ-lyon1.fr/~begnac/articles/Katetov.pdf}
  {\emph{The linear isometry group of the {G}urarij space is universal}},
  Proceedings of the American Mathematical Society \textbf{142} (2014), no.~7,
  2459--2467,
  \href{http://dx.doi.org/10.1090/S0002-9939-2014-11956-3}{doi:10.1090/S0002-9939-2014-11956-3},
  \href{http://arxiv.org/abs/1203.4915}{arXiv:1203.4915}.

\bibitem[{Ben}14b]{BenYaacov:Grothendieck}
\bysame, \href{http://math.univ-lyon1.fr/~begnac/articles/Grothendieck.pdf}
  {\emph{Model theoretic stability and definability of types, after
  {A.~Grothendieck}}}, Bulletin of Symbolic Logic \textbf{20} (2014), no.~4,
  491--496,
  \href{http://dx.doi.org/10.1017/bsl.2014.33}{doi:10.1017/bsl.2014.33}.

\bibitem[{Ben}15]{BenYaacov:MetricFraisse}
\bysame, \href{http://math.univ-lyon1.fr/~begnac/articles/Fraisse.pdf}
  {\emph{Fraïssé limits of metric structures}}, Journal of Symbolic Logic
  \textbf{80} (2015), no.~1, 100--115,
  \href{http://dx.doi.org/10.1017/jsl.2014.71}{doi:10.1017/jsl.2014.71},
  \href{http://arxiv.org/abs/1203.4459}{arXiv:1203.4459}.

\bibitem[BH]{BenYaacov-Henson:Gurarij}
Itaï \bgroup\scshape{}{Ben Yaacov}\egroup{} and C.~Ward
  \bgroup\scshape{}Henson\egroup{},
  \href{http://math.univ-lyon1.fr/~begnac/articles/Gurarij.pdf} {\emph{Generic
  orbits and type isolation in the {G}urarij space}}, Fundamenta Mathematicæ,
  to appear, \href{http://arxiv.org/abs/1211.4814}{arXiv:1211.4814}.

\bibitem[BK16]{BenYaacov-Kaichouh:Reconstruction}
Itaï \bgroup\scshape{}{Ben Yaacov}\egroup{} and Adriane
  \bgroup\scshape{}Kaïchouh\egroup{},
  \href{http://math.univ-lyon1.fr/~begnac/articles/Reconstruction.pdf}
  {\emph{Reconstruction of separably categorical metric structures}}, Journal
  of Symbolic Logic \textbf{81} (2016), no.~1, 216--224,
  \href{http://dx.doi.org/10.1017/jsl.2014.80}{doi:10.1017/jsl.2014.80},
  \href{http://arxiv.org/abs/1405.4177}{arXiv:1405.4177}.

\bibitem[BM15]{BenYaacov-Melleray:Grey}
Itaï \bgroup\scshape{}{Ben Yaacov}\egroup{} and Julien
  \bgroup\scshape{}Melleray\egroup{},
  \href{http://math.univ-lyon1.fr/~begnac/articles/GreySets.pdf} {\emph{Grey
  subsets of {P}olish spaces}}, Journal of Symbolic Logic \textbf{80} (2015),
  no.~4, 1379--1397,
  \href{http://dx.doi.org/10.1017/jsl.2014.60}{doi:10.1017/jsl.2014.60},
  \href{http://arxiv.org/abs/1103.2762}{arXiv:1103.2762}.

\bibitem[BM16]{BenYaacov-Melleray:Isometrisable}
\bysame, \href{http://math.univ-lyon1.fr/~begnac/articles/Isomble.pdf}
  {\emph{Isometrisable group actions}}, Proceedings of the American
  Mathematical Society \textbf{144} (2016), no.~9, 4081--4088,
  \href{http://dx.doi.org/10.1090/proc/13018}{doi:10.1090/proc/13018},
  \href{http://arxiv.org/abs/1406.7850}{arXiv:1406.7850}.

\bibitem[BPW01]{Buechler-Pillay-Wagner:SupersimpleTheories}
Steven \bgroup\scshape{}Buechler\egroup{}, Anand
  \bgroup\scshape{}Pillay\egroup{}, and Frank~O.
  \bgroup\scshape{}Wagner\egroup{}, \emph{Supersimple theories}, Journal of the
  American Mathematical Society \textbf{14} (2001), 109--124.

\bibitem[BT16]{BenYaacov-Tsankov:WAP}
Itaï \bgroup\scshape{}{Ben Yaacov}\egroup{} and Todor
  \bgroup\scshape{}Tsankov\egroup{},
  \href{http://math.univ-lyon1.fr/~begnac/articles/wap-stability.pdf}
  {\emph{Weakly almost periodic functions, model-theoretic stability, and
  minimality of topological groups}}, Transactions of the American Mathematical
  Society \textbf{368} (2016), no.~11, 8267--8294,
  \href{http://dx.doi.org/10.1090/tran/6883}{doi:10.1090/tran/6883},
  \href{http://arxiv.org/abs/1312.7757}{arXiv:1312.7757}.

\bibitem[BU07]{BenYaacov-Usvyatsov:dFiniteness}
Itaï \bgroup\scshape{}{Ben Yaacov}\egroup{} and Alexander
  \bgroup\scshape{}Usvyatsov\egroup{},
  \href{http://math.univ-lyon1.fr/~begnac/articles/dfin.pdf} {\emph{On
  $d$-finiteness in continuous structures}}, Fundamenta Mathematicae
  \textbf{194} (2007), 67--88,
  \href{http://dx.doi.org/10.4064/fm194-1-4}{doi:10.4064/fm194-1-4}.

\bibitem[BU10]{BenYaacov-Usvyatsov:CFO}
\bysame, \href{http://math.univ-lyon1.fr/~begnac/articles/cfo.pdf}
  {\emph{Continuous first order logic and local stability}}, Transactions of
  the American Mathematical Society \textbf{362} (2010), no.~10, 5213--5259,
  \href{http://dx.doi.org/10.1090/S0002-9947-10-04837-3}{doi:10.1090/S0002-9947-10-04837-3},
  \href{http://arxiv.org/abs/0801.4303}{arXiv:0801.4303}.

\bibitem[Eff65]{Effros:TransformationGroups}
Edward~G. \bgroup\scshape{}Effros\egroup{}, \emph{Transformation groups and
  {$C^{\ast} $}-algebras}, Annals of Mathematics. Second Series \textbf{81}
  (1965), 38--55.

\bibitem[Gur66]{Gurarij:UniversalPlacement}
Vladimir~I. \bgroup\scshape{}Gurarij\egroup{}, \emph{Spaces of universal
  placement, isotropic spaces and a problem of {M}azur on rotations of {B}anach
  spaces}, Akademija Nauk SSSR. Sibirskoe Otdelenie. Sibirski\u\i\ Matemati\v
  ceski\u\i\ \v Zurnal \textbf{7} (1966), 1002--1013.

\bibitem[LL66]{Lazar-Lindenstrauss:DualL1}
A.~J. \bgroup\scshape{}Lazar\egroup{} and
  J.~\bgroup\scshape{}Lindenstrauss\egroup{}, \emph{On {B}anach spaces whose
  duals are {$L_{1}$} spaces}, Israel Journal of Mathematics \textbf{4} (1966),
  205--207.

\bibitem[Lus76]{Lusky:UniqueGurarij}
Wolfgang \bgroup\scshape{}Lusky\egroup{}, \emph{The {G}urarij spaces are
  unique}, Archiv der Mathematik \textbf{27} (1976), no.~6, 627--635.

\bibitem[Maz33]{Mazur:Konvexe}
Stanisław \bgroup\scshape{}Mazur\egroup{}, \emph{Über konvexe mengen in
  linearen normierten räumen}, Studia Mathematica \textbf{4} (1933), 70--84.

\bibitem[Meg01]{Megrelishvili:EverySemitopologicalSemigroup}
Michael~G. \bgroup\scshape{}Megrelishvili\egroup{}, \emph{Every semitopological
  semigroup compactification of the group {$H\sb +[0,1]$} is trivial},
  Semigroup Forum \textbf{63} (2001), no.~3, 357--370.

\bibitem[Mel10]{Melleray:Oscillation}
Julien \bgroup\scshape{}Melleray\egroup{}, \emph{A note on {H}jorth's
  oscillation theorem}, Journal of Symbolic Logic \textbf{75} (2010), no.~4,
  1359--1365,
  \href{http://dx.doi.org/10.2178/jsl/1286198151}{doi:10.2178/jsl/1286198151}.

\bibitem[Pes06]{Pestov:Dynamics}
Vladimir \bgroup\scshape{}Pestov\egroup{}, \emph{Dynamics of
  infinite-dimensional groups}, University Lecture Series, vol.~40, American
  Mathematical Society, Providence, RI, 2006.

\bibitem[Tsa12]{Tsankov:OligomorphicGroupRepresentation}
Todor \bgroup\scshape{}Tsankov\egroup{}, \emph{Unitary representations of
  oligomorphic groups}, Geometric and Functional Analysis \textbf{22} (2012),
  no.~2, 528--555,
  \href{http://dx.doi.org/10.1007/s00039-012-0156-9}{doi:10.1007/s00039-012-0156-9}.

\end{thebibliography}
\end{document}